\theoremstyle{plain}
\newtheorem{theorem}{Theorem}[section]
\newtheorem{proposition}{Proposition}[section]
\newtheorem*{lemma}{Lemma}
\newtheorem{remark}{Remark}[section]
\newtheorem{example}{Example}[section]
\newtheorem{assumption}{Assumption}[section]
\numberwithin{equation}{section}
\numberwithin{figure}{section}
\numberwithin{table}{section}
\newcommand{\1}{\mathds{1}}
\newcommand{\co}{\mathrm{co}}
\newcommand{\Cov}{\mathrm{Cov}}
\newcommand{\diag}{\mathrm{diag}}
\newcommand{\Int}{\mathrm{Int}}
\newcommand{\spn}{\mathrm{span}}
\newcommand{\tr}{\mathrm{tr}}
\newcommand{\Unif}{\mathcal{U}}
\newcommand{\Vol}{\mathrm{Vol}}
\newcommand{\R}{\mathbb{R}}
\renewcommand{\S}{\mathbb{S}}
\newcommand{\B}{{\overline B}}
\newcommand{\G}{{\overline G}}
\newcommand{\N}{{\mathcal{N}}}
\newcommand{\E}{{\mathrm{E}}}
\renewcommand{\P}{{\overline P}}
\newcommand{\PP}{{\overline P_{\mathrm{tube}}}}
\newcommand{\dd}{{\mathrm{d}}}
\begin{document}
\title{Simultaneous confidence bands for contrasts between several nonlinear regression curves}
\author{Xiaolei Lu\footnote{Department of Statistical Sciences, Graduate University for Advanced Studies, 10-3 Midoricho, Tachikawa, Tokyo 190-8562, Japan, Email: {\tt lu.xiaolei@ism.ac.jp}}, \ \ %
Satoshi Kuriki\footnote{The Institute of Statistical Mathematics, 10-3 Midoricho, Tachikawa, Tokyo 190-8562, Japan, Email: {\tt kuriki@ism.ac.jp}}
}

\date{}

\maketitle

\begin{abstract}
We propose simultaneous confidence bands of the hyperbolic-type for the contrasts between several nonlinear (curvilinear) regression curves.
The critical value of a confidence band is determined from the distribution of the maximum of a chi-square random process defined on the domain of explanatory variables.
We use the volume-of-tube method to derive an upper tail probability formula of the maximum of a chi-square random process, which is asymptotically exact and sufficiently accurate in commonly used tail regions. 
Moreover, we prove that the formula obtained is equivalent to the expectation of the Euler-Poincar\'e characteristic of the excursion set of the chi-square random process, and hence conservative.
This result is therefore a generalization of Naiman's inequality for Gaussian random processes.
As an illustrative example, growth curves of consomic mice are analyzed.

\smallskip\noindent
{\it Keywords and phrases\/}:
Chi-square random process,
expected Euler-characteristic heuristic,
Gaussian random field,
growth curve,
Naiman's inequality,
volume-of-tube method.
\end{abstract}

\section{Introduction}
\label{sec:introduction}

This paper concerns multiple comparisons of $k$ ($\ge 3$) nonlinear (curvilinear) regression curves estimated from independent $k$ groups.
Suppose that for each group $i=1,\ldots,k$, and for each explanatory variable $x_j\in\mathcal{X}$, $j=1,\ldots,n$, we have observations $y_{ij1},\ldots,y_{ij r_i}$ as objective variables with $r_i$ replications,
which are assumed to follow the model
\begin{align}
\label{model}
y_{ijh} = g_i(x_j) + \varepsilon_{ijh}, \quad i=1,\ldots,k,\ \ j=1,\ldots,n,\ \ h=1,\ldots,r_i.
\end{align}
Here, $\mathcal{X}\subseteq\R$ is the domain of explanatory variables, and
random errors $\varepsilon_{ijh}$ are assumed to be independently distributed as the normal distribution $\N(0,\sigma(x_j)^2)$.
The variance function $\sigma(x)^2$ is supposed to be known, or at least known up to a constant $\sigma(x)^2 =\sigma^2\sigma_0(x)^2$.
In the case of the latter, we suppose that an independent estimator $\widehat\sigma^2$ of $\sigma^2$ is available.  
In addition, we assume that the true regression curve has the form
\begin{equation}
\label{beta_f}
 g_i(x)=\beta_i^\top f(x),\ \ x\in\mathcal{X},
\end{equation}
where $f(x)=(f_1(x),\ldots,f_p(x))^\top$ is a known regression basis vector function, $\beta_i=(\beta_{i1},\ldots,\beta_{ip})^\top$ is an unknown parameter vector.
Then, the least squares estimator $\widehat\beta_i$ of $\beta_i$ has the multivariate normal distribution $\N_p(\beta_i,r_i^{-1}\Sigma)$,
where
\[
 \Sigma = \left(\sum_{j=1}^n \frac{1}{\sigma(x_j)^2}f(x_j)f(x_j)^\top\right)^{-1}
\]
is the inverse of the $p\times p$ information matrix.
When $\sigma(x)^2=\sigma^2\sigma_0(x)^2$, we have $\Sigma=\sigma^2\Sigma_0$, where $\Sigma_0$ is $\Sigma$ with $\sigma(x_j)$ replaced by $\sigma_0(x_j)$.

Let $\mathcal{C}$ denote the set of vectors $c=(c_1,\ldots,c_k)^\top$ such that $\sum_{i=1}^k c_i=0$.
The focus of this paper is the construction of $1-\alpha$ simultaneous confidence bands for all the contrasts $\sum_{i=1}^k c_i g_i(x) = \sum_{i=1}^k c_i \beta_i^\top f(x)$ between the $k$ regression curves for all $x \in \mathcal{X}$ and $c\in\mathcal{C}$,
where $\mathcal{X}\subset\R$ is a finite or half-infinite interval, a finite union of such intervals, or an infinite interval $(-\infty,\infty)$.
Specifically, according to the traditional form of the point estimate plus or minus a probability point times the estimated standard error,
we construct a $1-\alpha$ simultaneous confidence band of the form
\begin{align}
\label{bands}
 \sum_{i=1}^k c_i \beta_i^\top f(x) \in \sum_{i=1}^k c_i\widehat\beta_i^\top f(x) \pm b_{1-\alpha}
 \sqrt{\left(\sum_{i=1}^k \frac{c_i^2}{r_i}\right) f(x)^\top\Sigma f(x)},
\end{align}
where $\widehat\beta_i^\top f(x)$ is the estimator of $\beta_i^\top f(x)$ in (\ref{beta_f}).
This form is referred to as a hyperbolic-type (\citep{Liu10}).
The critical value $b_{1-\alpha}$ is determined such that the event
 in (\ref{bands}) for all $x \in \mathcal{X}$ and $c \in \mathcal{C}$ holds with a probability of at least $1-\alpha$.
Our problem typically arises from growth curve analysis and longitudinal data analysis.

Throughout this paper, we assume that the regression curve $g_i(x)$ is a linear combination of a finite number of known basis functions in (\ref{beta_f}).
Although it is a conventional regression model, we must always be careful regarding the approximation bias caused by model misspecification.
This issue is examined in Section \ref{sec:simulation}.

The problem concerning the construction of simultaneous confidence bands in a regression model originates with Working and Hotelling \cite{Working-Hotelling29}.
They formalized this problem as the construction of confidence intervals for an estimated regression line, and provided a critical value by making use of the Cauchy-Schwarz inequality.
Specifically, 
Working and Hotelling \cite{Working-Hotelling29} treated the case of
\begin{itemize}
\itemsep=-1mm
\item[(i)]
one regression model (equivalent to the case $k=2$ in our problem),

\item[(ii)]
the simple regression $f(x)=(1,x)^\top$, and

\item[(iii)]
the unrestricted domain of the explanatory variables
$\mathcal{X}=(-\infty,\infty)$.
\end{itemize}
Subsequently, many reports concerning the relaxation of these conditions have appeared in literature.

In the case of one regression model, Wynn and Bloomfield \cite{Wynn-Bloomfield71} pointed out that the use of the Cauchy-Schwarz inequality leads to conservative bands unless both (ii) and (iii) hold.
They illustrated improved confidence bands for the quadratic regression $f(x)=(1,x,x^2)^\top$.
Uusipaikka \cite{Uusipaikka83} constructed exact confidence bands for linear regression when $\mathcal{X}$ is a finite interval.
See Liu et al.\ \cite{Liu-Lin-Piegorsch08} and Liu \cite{Liu10} for historical reviews.
The problem of $k$ ($\ge 3$) regression curve comparisons was considered by Spurrier \cite{Spurrier99, Spurrier02} and Lu and Chen \cite{Lu-Chen09}, who proposed procedures based on simple linear regression.
However, it is difficult to extend these methods to nonlinear regression.

One exception is Naiman's \cite{Naiman86} integral-geometric approach.
In the unit sphere $\S^{p-1}$ of the $p$-dimensional Euclidean space,
he defined a trajectory
\begin{equation}
\label{Gamma}
 \Gamma = \overline{\{ \psi(x) \mid x\in \mathcal{X} \}} \subset \S^{p-1}
\end{equation}
of a normalized basis vector function
\begin{equation}
\label{psi}
 \psi(x) = \frac{\Sigma^{1/2} f(x)}{\Vert\Sigma^{1/2} f(x)\Vert},
\end{equation}
and evaluated the volume of the tubular neighborhood of $\Gamma$.
In the case of one regression model, he constructed a simultaneous confidence band with the critical value obtained from this volume.
The volume formula for such tubes originated from Hotelling \cite{Hotelling39} and Weyl \cite{Weyl39}.
Currently, this idea is understood in the volume-of-tube method framework
(\cite{Adler-Taylor07}, \cite{Kuriki-Takemura01}, \cite{Kuriki-Takemura09}, \cite{Sun93}, \cite{Takemura-Kuriki02}).
As shown in Section \ref{sec:pivot},
we require the tail probability of the maximum of a Gaussian random field or chi-square random process as a pivotal quantity.
The volume-of-tube method is a methodology to evaluate such tail probabilities.

In this paper, we adopt this integral-geometric approach.
In the case of $k\ge 3$,
we define a subset $M$ in (\ref{M}) of a unit sphere, and by evaluating the volume of its tubular neighborhood, we obtain the critical value $b_{1-\alpha}$ in (\ref{bands}) by means of the volume-of-tube method.
Moreover, we prove that the proposed confidence band is conservative.
It is known that Naiman's \cite{Naiman86} confidence band is conservative (Naiman's inequality, see also \cite{Johnstone-Siegmund89}), and our result is regarded as its generalization.

Note that, in the setting of this paper, the covariance matrices of the estimators $\widehat\beta_i$ are identical up to a multiplicative constant.
This property arises from the condition that the explanatory variables $x_j$ are common between $k$ groups in the model (\ref{model}).
This represents the so-called the balanced case.
For the unbalanced case, the problem of constructing simultaneous confidence bands is quite tedious and only simulation-based approaches are available
(\cite{Jamshidian-Liu-Bretz10}, \cite{Liu10}, \cite{Liu-Jamshidian-Zhang04}, \cite{Liu-Wynn-Hayter08}).
In this paper, we address only the balanced case.

Moreover, note that in the one-group case ($k=1$), various simultaneous confidence bands by means of the volume-of-tube method have been proposed.
Johansen and Johnstone \cite{Johansen-Johnstone90} demonstrated the usefulness of Hotelling's volume formula for the construction of simultaneous bands.
The application to the B-spline regression is found in Zhou et al.\ \cite{Shen-Wolfe-Zhou98}.
Sun and Loader \cite{Sun-Loader94} proposed a modification to the volume-of-tube formula when a small approximation bias caused by model misspecification exists.
In succeeding papers, Sun and her coauthors developed this idea further in various model settings 
 (\cite{Faraway-Sun95}, \cite{Sun-Loader-McCormick00}, \cite{Sun-Raz-Faraway99}).
See also Krivobokova et al.\ \cite{Krivobokova-Kneib-Claeskenset10}.
The crucial difference between this paper and existing work is that in this paper, we need to treat a Gaussian random field with a general dimensional ($k-1$ dimensional) index set, and need the volume formula up to an arbitrary order.

The layout of this paper is as follows.
In Section \ref{sec:pivot}, we define a Gaussian random field and a chi-square random process as pivotal quantities.
We show that the critical value $b_{1-\alpha}$ is determined from the upper tail probability of the maximum of a Gaussian random field or chi-square random process.
In Section \ref{sec:tube}, the volume-of-tube method and its related method known as the expected Euler-characteristic heuristics are briefly summarized.
The primary results are provided in Section \ref{sec:main}.
Some simulation study under model misspecification is conducted in Section \ref{sec:simulation}.
Section \ref{sec:growth} is devoted to the growth curve data analysis.
Proof details are located in the Appendix.

\section{Random fields as pivotal quantities}
\label{sec:pivot}

Our problem is to determine the critical value $b_{1-\alpha}$ in (\ref{bands}).
First, assume that $\Sigma$ is fully known.
Define a pivotal quantity
\begin{equation}
\label{Txc}
T(x,c)
= \frac{\sum_{i=1}^k c_i(\widehat\beta_i-\beta_i)^\top f(x)}{\sqrt{\left(\sum_{i=1}^k\frac{c_i^2}{r_i}\right) f(x)^\top\Sigma f(x)}}.
\end{equation}
Then, the critical value $b_{1-\alpha}$ is the solution $b$ of the equation
\begin{align*}
\Pr\bigl(T(x,c)\le b,\,\forall x \in \mathcal{X},\,\forall c \in \mathcal{C}\bigr)
= \Pr\biggl(\max_{x\in\mathcal{X},c\in\mathcal{C}} T(x,c)\le b\biggr)
= 1-\alpha.
\end{align*}
In this expression, we use $T(x,c)$ instead of $|T(x,c)|$, because $c\in\mathcal{C}$ implies $-c\in\mathcal{C}$ and $|T(x,c)|$ is equal to $T(x,c)$ or $T(x,-c)$.
Inverting $|T(c,x)|\le b_{1-\alpha}$ yields the $1-\alpha$ simultaneous confidence band in (\ref{bands}).

In the following, we show that $b_{1-\alpha}^2$ is the upper $\alpha$ point of the maximum of a chi-square random process.
We can assume that $\sum_{i=1}^k c_i^2/r_i=1$ without loss of generality, 
because $T(x,c)$ is a homogeneous function in $c$.
Let $\rho=(\sqrt{r_1},\ldots,\sqrt{r_k})^\top$,
and define a $k\times (k-1)$ matrix $H$ such that
$\rho^\top H=0$, $H^\top H=I_{k-1}$, and $H H^\top = I_k - \rho \rho^\top/(\rho^\top \rho)$.
(An example of $H$ is given in Remark \ref{rem:H} below.)
Then the $c=(c_1,\ldots,c_k)^\top$ such that $\sum_{i=1}^k c_i^2/r_i=1$ and $\sum_{i=1}^k c_i=0$ are represented as
\[
 c = \diag(\sqrt{r_1},\ldots,\sqrt{r_k}) H h,\ \ h\in\S^{k-2},
\]
where $\S^{k-2}$ is the set of ($k-1$)-dimensional unit column vectors.

Let $\Sigma^{1/2}$ be a matrix such that
 $(\Sigma^{1/2})^\top \Sigma^{1/2} = \Sigma$, and let $\Sigma^{-1/2}$ be its inverse.
Then, $\eta_i=\sqrt{r_i}(\Sigma^{-1/2})^\top(\widehat\beta_i-\beta_i)$ is distributed normally as $\N_p(0,I)$, independently for $i=1,\ldots,k$.
Let $\psi:\mathcal{X}\to\S^{p-1}$ as defined in (\ref{psi}).
Then, $T(x,c)$ is rewritten as 
\begin{align}
T(x,c)
=& \sum_{i=1}^k \frac{c_i}{\sqrt{r_i}} \sqrt{r_i} \{(\Sigma^{-1/2})^\top(\widehat\beta_i-\beta_i)\}^\top \frac{\Sigma^{1/2} f(x)}{\Vert\Sigma^{1/2} f(x)\Vert} \nonumber \\
=& c^\top \diag(\sqrt{r_1},\ldots,\sqrt{r_k})^{-1}
 \begin{pmatrix} \eta_1^\top \\ \vdots \\ \eta_k^\top \end{pmatrix}_{k\times p} \psi(x) \nonumber \\
=& h^\top \begin{pmatrix} \xi_1^\top \\ \vdots \\ \xi_{k-1}^\top \end{pmatrix}_{(k-1)\times p} \psi(x) \nonumber \\
=& \xi^\top \{h\otimes\psi(x)\},
\label{Txc2}
\end{align}
where
$\xi_i$ are $p\times 1$ vectors defined by  
$(\xi_1,\ldots,\xi_{k-1})_{p\times(k-1)}=(\eta_1,\ldots,\eta_k)_{p\times k}H$,
$\xi = (\xi_1^\top,\ldots,\xi_{k-1}^\top)^\top$ is a $p(k-1)\times 1$ vector,
and `$\otimes$' is the Kronecker product.
Vectors $\eta_i$ consist of independent standard Gaussian random variables $\N(0,1)$, therefore, so does vector $\xi$.
When $x$ and $h$ are fixed, because $\Vert\psi(x)\Vert=\Vert h\otimes\psi(x)\Vert=1$, $\xi_i^\top \psi(x)$ is distributed as $\N(0,1)$ independently for $i=1,\ldots,k$,
and $\xi^\top \{h\otimes\psi(x)\}$ is distributed as $\N(0,1)$.

From (\ref{Txc2}), we can see that
\begin{equation}
\label{maxT}
 \max_{c\in\mathcal{C}} T(x,c) = \sqrt{\sum_{i=1}^{k-1} \bigl\{\xi_i^\top \psi(x)\bigr\}^2}.
\end{equation}
For each fixed $x$, this is distributed as the square root of the chi-square distribution $\chi^2_{k-1}$ with $k-1$ degrees of freedom.

When $\Sigma=\sigma^2\Sigma_0$ with $\Sigma_0$ known,
and an independent estimator $\widehat\sigma^2\sim\sigma^2\chi^2_\nu/\nu$ of unknown $\sigma^2$ is available,
we redefine $T(x,c)$ in (\ref{Txc}) by replacing $\Sigma$ in the denominator with $\widehat\sigma^2\Sigma_0$.
Thus, instead of (\ref{Txc2}) and (\ref{maxT}), we have
\[
 T(x,c) = \frac{1}{\tau}\xi^\top \{h\otimes\psi(x)\}, \quad 
 \max_{c\in\mathcal{C}} T(x,c) = \sqrt{\frac{1}{\tau^2} \sum_{i=1}^{k-1} \bigl\{\xi_i^\top \psi(x)\bigr\}^2}, \quad \tau^2=\frac{\widehat\sigma^2}{\sigma^2}.
\]

Now, we consider the object in (\ref{Txc2}) as a random function of $(x,h)$:
\begin{equation}
\label{Z}
 Z(x,h) = \xi^\top \{h\otimes\psi(x)\},\ \ (x,h)\in\mathcal{X}\times\S^{k-2},
\end{equation}
where $\xi\sim \N_{p(k-1)}(0,I)$.
Then, $Z(x,h)$ is the Gaussian random field with mean 0, variance 1, and covariance function
\[
 \Cov\bigl(Z(x,h),Z(\tilde x,\tilde h)\bigr) = \psi(x)^\top\psi(\tilde x) \cdot h^\top \tilde h.
\]
Similarly, we define the chi-square random process with $k-1$ degrees of freedom:
\begin{equation}
\label{Y}
 Y(x) = \sum_{i=1}^{k-1} \bigl\{\xi_i^\top\psi(x)\bigr\}^2,\ \ x\in\mathcal{X}.
\end{equation}

We summarize the results of this section below.
\begin{theorem}
When $\Sigma$ is known, the critical value $b_{1-\alpha}$ is determined as the solution $b=b_{1-\alpha}$ of
\[
 \Pr\biggl(\max_{x\in\mathcal{X},h\in\S^{k-2}} Z(x,h)\ge b \biggr)
=  \Pr\biggl(\max_{x\in\mathcal{X}} Y(x)\ge b^2\biggr) = \alpha,
\]
where $Z(x,h)$ is the Gaussian random field defined in (\ref{Z}),
and $Y(x)$ is the chi-square random process defined in (\ref{Y}).

When $\Sigma=\sigma^2\Sigma_0$ with $\Sigma_0$ known, the critical value $b_{1-\alpha}$ is determined as the solution $b=b_{1-\alpha}$ of
\[
 \E\biggl[\Pr\biggl(\max_{x\in\mathcal{X},h\in\S^{k-2}} Z(x,h)\ge b\tau \,\big|\, \tau^2 \biggr)\biggr]
=  \E\biggl[\Pr\biggl(\max_{x\in\mathcal{X}} Y(x)\ge b^2\tau^2 \,\big|\, \tau^2 \biggr)\biggr] = \alpha,
\]
where the expectation is taken over $\tau^2\sim\chi^2_\nu/\nu$, with $\nu$ being the degrees of freedom of the estimator of $\sigma^2$.
\end{theorem}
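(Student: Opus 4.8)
The plan is to assemble the identities already derived in this section into a single chain that links the defining equation for $b_{1-\alpha}$ to the two random objects $Z(x,h)$ and $Y(x)$. The starting point is the definition of $b_{1-\alpha}$ as the solution of $\Pr(\max_{x\in\mathcal{X},c\in\mathcal{C}} T(x,c)\le b)=1-\alpha$, equivalently $\Pr(\max_{x\in\mathcal{X},c\in\mathcal{C}} T(x,c)\ge b)=\alpha$. First I would record that, since $T(x,c)$ is homogeneous of degree zero in $c$, the normalization $\sum_{i=1}^k c_i^2/r_i=1$ costs nothing, and under this normalization the reparametrization $c=\diag(\sqrt{r_1},\ldots,\sqrt{r_k})Hh$ places the contrast set $\{c\in\mathcal{C}:\sum_i c_i^2/r_i=1\}$ in bijection with the unit sphere $\S^{k-2}$. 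Hence by the computation culminating in (\ref{Txc2}) we have $\max_{c\in\mathcal{C}} T(x,c)=\max_{h\in\S^{k-2}} Z(x,h)$ for each fixed $x$.

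Next I would carry out the maximization over $h$ explicitly. Writing $v(x)=(\xi_1^\top\psi(x),\ldots,\xi_{k-1}^\top\psi(x))^\top$, we have $Z(x,h)=h^\top v(x)$, and maximizing this linear functional over the unit sphere gives $\max_{h\in\S^{k-2}} h^\top v(x)=\Vert v(x)\Vert=\sqrt{Y(x)}$, attained at $h=v(x)/\Vert v(x)\Vert$; this is the Cauchy--Schwarz equality case recorded in (\ref{maxT}). Taking the maximum over $x$ and using that $\sqrt{\,\cdot\,}$ is increasing then yields $\max_{x\in\mathcal{X},c\in\mathcal{C}} T(x,c)=\max_{x,h} Z(x,h)=\sqrt{\max_{x\in\mathcal{X}} Y(x)}$. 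Substituting into the defining equation and noting that for $b\ge 0$ the event $\{\sqrt{\max_x Y(x)}\ge b\}$ coincides with $\{\max_x Y(x)\ge b^2\}$ gives the first displayed identity, with both probabilities equal to $\alpha$.

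For the unknown-variance case I would start from the representation $T(x,c)=\tau^{-1}Z(x,h)$ with $\tau^2=\widehat\sigma^2/\sigma^2\sim\chi^2_\nu/\nu$, established just before (\ref{Z}). Because $\widehat\sigma^2$ is an \emph{independent} estimator, $\tau$ is independent of the Gaussian vector $\xi$, so factoring out the positive scalar $\tau^{-1}$ gives $\max_{x,c}T(x,c)=\tau^{-1}\max_{x,h}Z(x,h)$, and the event $\{\max_{x,c}T(x,c)\ge b\}$ becomes $\{\max_{x,h}Z(x,h)\ge b\tau\}$. Conditioning on $\tau^2$ and taking expectation then reduces the problem to the known-$\Sigma$ case with threshold $b\tau$, producing the stated formula; the chi-square version follows identically with threshold $b^2\tau^2$.

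I expect no serious obstacle, since every analytic ingredient is already in place. The one point requiring care is the attainment of the maxima: the supremum over $h\in\S^{k-2}$ is attained by compactness of the sphere, while the supremum over $x$ should be read on the closed trajectory $\Gamma$ of (\ref{Gamma}), which is a closed, hence compact, subset of $\S^{p-1}$ even when $\mathcal{X}$ is unbounded, so that $\max_{x\in\mathcal{X}} Y(x)$ is genuinely attained rather than merely approached. Verifying that the reparametrized contrast set exhausts all of $\S^{k-2}$ (guaranteed by $H^\top H=I_{k-1}$ and $HH^\top=I_k-\rho\rho^\top/(\rho^\top\rho)$) is the only other detail to confirm.
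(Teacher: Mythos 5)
Your proposal is correct and follows essentially the same route as the paper: the theorem there is presented as a summary of the Section 2 derivations, namely the degree-zero homogeneity of $T(x,c)$, the reparametrization $c=\diag(\sqrt{r_1},\ldots,\sqrt{r_k})Hh$ identifying the normalized contrasts with $\S^{k-2}$, the Cauchy--Schwarz maximization over $h$ giving (\ref{maxT}), and the conditioning on the independent $\tau^2$ in the unknown-variance case. Your added remarks on attainment of the maxima and on the contrast set exhausting $\S^{k-2}$ are sound but not needed beyond what the paper already records.
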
 

\begin{remark}
\label{rem:H}
An example of $k\times (k-1)$ matrix $H$ such that $\rho^\top H=0$,
$H^\top H=I_{k-1}$, $H H^\top = I_k - \rho \rho^\top/(\rho^\top \rho)$ with $\rho=(\sqrt{r_1},\ldots,\sqrt{r_k})^\top$ is given as
\[
 H = \begin{pmatrix}
 \frac{\sqrt{r_1 r_2}}{\sqrt{R_1 R_2}} & \frac{\sqrt{r_1 r_3}}{\sqrt{R_2 R_3}} & \dots & \frac{\sqrt{r_1 r_k}}{\sqrt {R_{k-1}R_k}} \\
 -\frac{R_1}{\sqrt{R_1 R_2}} & \frac{\sqrt{r_2 r_3}}{\sqrt{R_2 R_3}} & \dots & \frac{\sqrt{r_2 r_k}}{\sqrt{R_{k-1}R_k}} \\
 & -\frac{R_2}{\sqrt{R_2 R_3}} & \dots & \frac{\sqrt{r_3 r_k}}{\sqrt{R_{k-1}R_k}} \\
 & & \ddots & \vdots \\
 0 & & & -\frac{R_{k-1}}{\sqrt{R_{k-1}R_k}}
 \end{pmatrix}_{k\times (k-1)},
\]
where $R_i=\sum_{j=1}^i r_j$.
\end{remark}

\section{Preliminaries on the volume-of-tube method}
\label{sec:tube}

In this section, we summarize the volume-of-tube method for evaluating the upper tail probability of the maximum of a Gaussian random field.

Let $\xi$ be a Gaussian random vector distributed as $\N_n(0,I)$.
Let $M$ be a closed subset of $\S^{n-1}$, which is the unit sphere (the set of unit column vectors) of $\R^n$.
Then, the random map $u\mapsto \xi^\top u$, $u\in M$, is a Gaussian random field with mean 0, variance 1, and covariance function $\Cov(\xi^\top u,\xi^\top v)=u^\top v$.
The volume-of-tube method approximates the distribution of the maximum $\max_{u\in M} \xi^\top u$.
The maximum of $Z(x,h)$ in (\ref{Z}) can be treated in this framework by setting
\begin{equation}
\label{M}
M = \overline{\{ h\otimes\psi(x) \mid (x,h)\in\mathcal{X}\times\S^{k-2} \}}\quad\mbox{and}\quad n=p(k-1).
\end{equation}
The dimension of $M$ is $d=\dim M=k-1$.
To apply the volume-of-tube method, we require the following assumption on $M$.
Let the symbol `$\sqcup$' denote disjoint union.

\begin{assumption}
\label{as:M}
$M$ is a $d$-dimensional closed piecewise $C^2$-manifold, or 
$M$ is a $d$-dimensional $C^2$-manifold with piecewise $C^2$-boundary.
We write $M=\Int M \sqcup \partial M$, where $\Int M$ and $\partial M$ denote the interior and boundary of $M$, respectively.
In the former case, $\partial M=\emptyset$.
\end{assumption}

When $M$ is defined by (\ref{M}), we can provide a sufficient condition for Assumption \ref{as:M}.
\begin{assumption}
\label{as:psi}
$\psi:\mathcal{X}\to\S^{p-1}$ is a one-to-one map of class piecewise $C^2$.
There does not exist $x,\tilde x\in\mathcal{X}$ such that $\psi(x)=-\psi(\tilde x)$.
\end{assumption}
Under Assumption \ref{as:psi}, the map $(x,h)\mapsto h\otimes\psi(x)$ is a piecewise $C^2$ one-to-one map.

\begin{example}
Consider the polynomial regression with a basis function vector $f(x)=(1,x,\ldots,x^{p-1})^\top$.
When the domain of $x$ is a finite interval $\mathcal{X}=[a,b]$,
we have
\begin{align*}
 \Int M =& \{ h\otimes \psi(x) \mid x\in (a,b),\,h\in\S^{k-2} \}, \\
 \partial M =& \{ h\otimes \psi(a) \mid h\in\S^{k-2} \} \sqcup \{ h\otimes \psi(b) \mid h\in\S^{k-2} \}.
\end{align*}
When $\mathcal{X}=(-\infty,\infty)$,
$\psi(\pm\infty)=(\pm 1)^{p-1}\Sigma^{1/2} e_p/\sqrt{e_p^\top\Sigma e_p}$ with $e_p=(0,\ldots,0,1)^\top$, and hence $h\otimes \psi(\infty)=(-1)^{p-1}h\otimes\psi(-\infty)$.
This denotes that $M$ is a closed manifold without boundary.
\end{example}

\begin{example}
Consider the trigonometric regression with a basis function vector
\[
 f(x)=\left(1,\sqrt{2}\cos x,\sqrt{2}\sin x,\ldots,\sqrt{2}\cos mx,\sqrt{2}\sin mx\right)^\top.
\]
When $\mathcal{X}=[0,2\pi)$, $M$ is a closed manifold without boundary.
\end{example}

We now define ``tube'', the key concept of the volume-of-tube method.
The set of $\S^{n-1}$ points whose great circle distance from $M$ is less than or equal to $\theta$ is the tube about $M$ with radius $\theta$, and has the expression
\[
 M_\theta = \Bigl\{ v\in \S^{n-1} \,\big|\, \min_{u\in M} \cos^{-1} \bigl(u^\top v\bigr)\le\theta \Bigr\}.
\]

If the radius $\theta$ is sufficiently small, the tube $M_\theta$ does not have self-overlap.
Whereas, when the radius $\theta$ is large, the tube does have self-overlap.
The threshold radius between the two cases is known as the critical radius $\theta_{\mathrm{c}}$.
We let $\theta_{\mathrm{c}}=\pi/2$ when the threshold radius is more than $\pi/2$.
Under Assumption \ref{as:M}, we can prove that $\theta_{\mathrm{c}}>0$.
Figure 1 in Kuriki and Takemura \cite{Kuriki-Takemura09} depicts an example of a tube and its critical radius.

The support cone (or tangent cone) of $M$ at $u\in M$ is denoted by $S_u M$.
(See Section 1.2 of Takemura and Kuriki \cite{Takemura-Kuriki02} for the definition.)
The cone with base set $M$ is denoted by $\co(M) = \bigsqcup_{\lambda\ge 0} \lambda M$.
Then, the support cone of $\co(M)$ at $u\in M$ is decomposed as
$S_u(\co(M)) = S_u M\oplus\spn\{u\}$, where $\spn\{u\}$ is the linear space spanned by $u$.
The normal cone of $\co(M)$ at $u\in M$ is defined by the dual of the support cone:
$N_u(\co(M)) = S_u(\co(M))^*$.

Note that the ($m-1$)-dimensional volume of $\S^{m-1}$ is
$\Omega_m=2\pi^{m/2}/\Gamma(m/2)$.
For $m\times m$ matrix $A=(a_{ij})$, let $\tr_0 A=1$ and
\[
 \tr_e A = \sum_{1\le k_1<\ldots<k_e\le m} \det(a_{k_i k_j})_{1\le i,j\le e},\ \ 1\le e\le m
\]
(\cite{Muirhead05}, Appendix A.7).
Note that $\tr_1 A=\tr A$, $\tr_m A=\det A$.
The upper probability of the chi-square distribution with $m$ degrees of freedom is denoted by $\G_m(\cdot)$.
Now we can provide the upper tail probability formula for the Gaussian field $\xi^\top u$, $u\in M$.
The theorem below is a special case of Proposition 2.2 of Takemura and Kuriki \cite{Takemura-Kuriki02}.

\begin{proposition}
As $b\to\infty$,
\begin{equation}
\label{p}
\Pr\biggl(\max_{u\in M} \xi^\top u\ge b\biggr) =
 \PP(b) + O(\G_n(b^{2}(1+\tan^2\theta_{\mathrm{c}}))),
\end{equation}
where
\begin{equation}
\label{p-hat}
\PP(b)
= \sum_{0\le e\le d,\,e:\rm even } w_{d+1-e} \G_{d+1-e}(b^2)
+ \sum_{0\le e\le d-1} w'_{d-e} \G_{d-e}(b^2),
\end{equation}
with
\begin{align}
 w_{d+1-e} =& \frac{1}{\Omega_{d+1-e}\Omega_{n-d-1+e}}\int_{\Int M}\biggl\{\int_{N_u(\co(M))\cap\S^{n-1}} \tr_e H(u,v) \,\dd v\biggr\} \,\dd u,
\label{w} \\
 w'_{d-e} =& \frac{1}{\Omega_{d-e}\Omega_{n-d+e}}\int_{\partial M}\biggl\{\int_{N_u(\co(M))\cap\S^{n-1}} \tr_e H'(u,v) \,\dd v\biggr\} \,\dd u.
\label{w'}
\end{align}
Here, $H(u,v)$ is the second fundamental form of $\Int M$ at $u$ in the direction of $v$, and
$H'(u,v)$ is the second fundamental form of $\partial M$ at $u$ in the direction of $v$.
$\dd u$ is the volume element of $\Int M$ or $\partial M$, and $\dd v$ is the volume element of $N_u(\co(M))\cap\S^{n-1}$.
\end{proposition}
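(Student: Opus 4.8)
The plan is to derive (\ref{p}) by the volume-of-tube method, recognizing it as the specialization of the general tube formula (Proposition 2.2 of \cite{Takemura-Kuriki02}) to the present setting, where the index set is the subset $M\subset\S^{n-1}$ with $n=p(k-1)$. The starting point is the polar decomposition $\xi=r\omega$ with $r=\Vert\xi\Vert$ and $\omega=\xi/\Vert\xi\Vert$; since $\xi\sim\N_n(0,I)$, the radius satisfies $r^2\sim\chi^2_n$ and is independent of the direction $\omega$, which is uniform on $\S^{n-1}$. For a fixed $\omega$ with $\max_{u\in M}\omega^\top u>0$, writing $\theta(\omega)=\min_{u\in M}\cos^{-1}(\omega^\top u)$ for the great-circle distance from $\omega$ to $M$, we have $\max_{u\in M}\xi^\top u = r\cos\theta(\omega)$, so the event $\{\max_{u\in M}\xi^\top u\ge b\}$ is exactly $\{r\cos\theta(\omega)\ge b\}$. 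Integrating out $r$ gives
\[
 \Pr\biggl(\max_{u\in M}\xi^\top u\ge b\biggr) = \E_\omega\Bigl[\G_n\bigl(b^2/\cos^2\theta(\omega)\bigr)\,\1\{\theta(\omega)<\pi/2\}\Bigr],
\]
which reduces the problem to an integral over the sphere weighted by a function of the distance to $M$.

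The next step is to evaluate this integral using the geometry of the tube $M_\theta$. When $\theta\le\theta_{\mathrm{c}}$ the tube has no self-overlap, so every $\omega\in M_{\theta_{\mathrm{c}}}\setminus M$ has a unique foot point $u\in M$ and can be written in normal coordinates as $\omega = u\cos\phi + v\sin\phi$ for a unique direction $v\in N_u(\co(M))\cap\S^{n-1}$ and angle $\phi=\theta(\omega)$. I would change variables from $\omega$ to the triple $(u,v,\phi)$; the Jacobian of this map is a polynomial in $\sin\phi,\cos\phi$ whose coefficients are the elementary symmetric functions $\tr_e H(u,v)$ of the principal curvatures of $\Int M$ at $u$ in the direction $v$ (Weyl's tube formula on the sphere). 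Substituting this Jacobian and integrating $\phi$ against $\G_n(b^2/\cos^2\phi)$ produces, for each \emph{even} $e$ (odd-order terms cancel under $v\mapsto-v$ over the full normal sphere of an interior point), the factor $\G_{d+1-e}(b^2)$ together with the normalizing constant $1/(\Omega_{d+1-e}\Omega_{n-d-1+e})$, yielding precisely the interior coefficients $w_{d+1-e}$ in (\ref{w}). The boundary $\partial M$ contributes an analogous family of terms: near a boundary point the normal cone is one dimension larger and no longer symmetric, so the same change-of-variables bookkeeping produces terms of shifted dimension and the coefficients $w'_{d-e}$ in (\ref{w'}) governed by the second fundamental form $H'$ of $\partial M$.

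Finally, the error estimate. The exact equality above holds only on the no-overlap region $M_{\theta_{\mathrm{c}}}$, so the contribution of $\{\omega:\theta(\omega)>\theta_{\mathrm{c}}\}$ must be bounded. On that set $\cos^2\theta(\omega)\le\cos^2\theta_{\mathrm{c}}=1/(1+\tan^2\theta_{\mathrm{c}})$, whence the integrand is at most $\G_n(b^2(1+\tan^2\theta_{\mathrm{c}}))$, giving the stated remainder $O(\G_n(b^2(1+\tan^2\theta_{\mathrm{c}})))$; Assumption \ref{as:M} guarantees $\theta_{\mathrm{c}}>0$, so this remainder is genuinely of smaller order than the leading $\G_{d+1}(b^2)$ term as $b\to\infty$. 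I expect the main obstacle to be the differential-geometric heart of the second step: justifying the normal-coordinate change of variables and identifying its Jacobian with the $\tr_e H(u,v)$ expansion, uniformly up to the critical radius and including the piecewise-$C^2$ and boundary cases permitted by Assumption \ref{as:M}. Since this is exactly the content established in full generality in \cite{Takemura-Kuriki02}, the cleanest route is to verify that $M$ and the Gaussian field $\xi^\top u$ satisfy the hypotheses of their Proposition 2.2 and then quote it, rather than reprove the tube formula from scratch.
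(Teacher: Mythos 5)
Your proposal is correct and takes essentially the same route as the paper: the paper gives no independent proof of this proposition, stating only that it is a special case of Proposition~2.2 of Takemura and Kuriki \cite{Takemura-Kuriki02}, which is exactly the citation you land on after verifying that $M$ under Assumption \ref{as:M} satisfies its hypotheses. Your accompanying sketch (polar decomposition, normal-coordinate change of variables with Jacobian expanded in $\tr_e H$, cancellation of odd terms over the symmetric normal sphere at interior points, and the $O(\G_n(b^2(1+\tan^2\theta_{\mathrm{c}})))$ bound on the self-overlap region) is the standard derivation underlying that cited result and is sound.
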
 

In (\ref{p}), because $\theta_{\mathrm{c}}>0$, the error term $O\bigl(\G_n(b^{2}(1+\tan^2\theta_{\mathrm{c}}))\bigr)=O(b^{n-2}e^{-b^2(1+\tan^2\theta_{\mathrm{c}})/2})$ is exponentially smaller than each term $\G_j(b^2)=O(b^{j-2}e^{-b^2/2})$.
Hence, (\ref{p-hat}) can be used as an approximation formula when $b$ is large.
The method in which $\PP(b)$ is used as an approximate value is referred to as the volume-of-tube method, or simply the tube method.
This name comes from the volume formula for $M_\theta$ below.
\begin{remark}
\label{prop:tube}
For the radius $\theta\in [0,\theta_{\mathrm{c}}]$, the ($n-1$)-dimensional spherical volume of the tube $M_\theta$ is given by
\begin{align*}
 \Vol_{n-1}(M_\theta)
= \Omega_n \Biggl\{ & \sum_{0\le e\le d,\,e:\rm even } w_{d+1-e} \B_{\frac{1}{2}(d+1-e),\frac{1}{2}(n-d-1+e)}(\cos^2\theta) \\
& +\sum_{0\le e\le d-1} w'_{d-e} \B_{\frac{1}{2}(d-e),\frac{1}{2}(n-d+e)}(\cos^2\theta) \Biggr\},
\end{align*}
where $w_{d+1-e}$ and $w'_{d-e}$ are given in (\ref{w}) and (\ref{w'}),
$\B_{a,b}(\cdot)$ is the upper probability of the beta distribution with parameter $(a,b)$.
\end{remark}

The critical radius $\theta_{\mathrm{c}}$ can be evaluated using the following characterization (Theorem 4.18 of Federer \cite{Federer59}, Proposition 4.3 of Johansen and Johnstone \cite{Johansen-Johnstone90}, Lemma 2.2 of Takemura and Kuriki \cite{Takemura-Kuriki02}).
For a proof, see Theorem 2.9 of Kuriki and Takemura \cite{Kuriki-Takemura09}.
\begin{proposition}
The critical radius $\theta_{\mathrm{c}}$ of $M$ is given by
\begin{equation}
\label{critical}
 \tan^2\theta_{\mathrm{c}} = \inf_{u,v\in M,\,u\ne v} \frac{(1-u^\top v)^2}{\Vert P^\perp_v(u-v)\Vert^2},
\end{equation}
where $P^\perp_v$ is the orthogonal projection onto the normal cone $N_v(\co(M))$ of $\co(M)$ at $v$.
\end{proposition}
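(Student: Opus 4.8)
The plan is to work directly from the geometric meaning of the critical radius as the largest $\theta$ for which the tube $M_\theta$ has no self-overlap. Every point of $M_\theta$ lies on a minimizing geodesic issuing from its nearest point of $M$, and such a geodesic leaves $M$ along a direction lying in the normal cone; hence $M_\theta$ is the image of the normal exponential map $E(v,\nu,t)=\cos t\,v+\sin t\,\nu$, where $v\in M$, $\nu\in N_v(\co(M))\cap\S^{n-1}$, and $0\le t\le\theta$. Note that $\nu\perp v$ automatically, since $v\in S_v(\co(M))$ forces $N_v(\co(M))\subset v^\perp=T_v\S^{n-1}$. Absence of self-overlap up to radius $\theta$ is then equivalent to the statement that for each boundary point $w=E(v,\nu,\theta)$ the base point $v$ is the unique nearest point of $M$ to $w$. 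I would adopt this characterization (Federer; Johansen and Johnstone; Takemura and Kuriki, as cited before the statement) as the operational definition of $\theta_{\mathrm{c}}$.

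First I would translate the unique-nearest-point condition into a scalar inequality. Minimizing the geodesic distance $\cos^{-1}(w^\top u)$ over $u\in M$ is the same as maximizing $w^\top u$, so $v$ is the unique nearest point to $w$ precisely when $w^\top u<w^\top v$ for all $u\in M\setminus\{v\}$. Substituting $w=\cos\theta\,v+\sin\theta\,\nu$ and using $w^\top v=\cos\theta$ together with $\nu^\top v=0$ gives, after rearrangement, the equivalent condition $\tan\theta\,\nu^\top(u-v)<1-u^\top v$. This is vacuous when $\nu^\top(u-v)\le 0$, because then $u$ is automatically farther ($1-u^\top v>0$ for $u\ne v$ on the sphere). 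Thus no self-overlap up to $\theta$ holds if and only if $\tan\theta\le\inf\frac{1-u^\top v}{\nu^\top(u-v)}$, the infimum taken over $u\ne v$ and unit $\nu\in N_v(\co(M))$ with $\nu^\top(u-v)>0$.

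Next I would perform the inner optimization over $\nu$. For fixed $u,v$ the denominator is maximized over unit vectors of the closed convex cone $K=N_v(\co(M))$, and the standard cone identity $\max_{\nu\in K,\,\Vert\nu\Vert=1}\nu^\top z=\Vert P_K z\Vert$ (proved via the Moreau decomposition $z=P_K z+(z-P_K z)$ with $z-P_K z$ in the polar cone, and read as $0$ when $P_K z=0$) yields $\sup_\nu\nu^\top(u-v)=\Vert P^\perp_v(u-v)\Vert$, where $P^\perp_v$ is the projection onto $N_v(\co(M))$. Pairs with $P^\perp_v(u-v)=0$ impose no constraint and contribute $+\infty$ to the infimum. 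Inserting this supremum gives $\tan\theta_{\mathrm{c}}=\inf_{u\ne v}\frac{1-u^\top v}{\Vert P^\perp_v(u-v)\Vert}$, and squaring the nonnegative ratio produces the asserted formula (\ref{critical}).

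The step I expect to be the main obstacle is the equivalence between embeddedness of the tube and the pointwise inequality, together with the behaviour of the infimum in the limit $u\to v$. Two mechanisms can destroy embeddedness: a genuine bi-tangency, where a single $w$ has two well-separated nearest points, and a focal degeneracy caused by the curvature of $M$, where the normal exponential map becomes singular with no immediately visible second base point. The formula must absorb both, and the focal case surfaces only as the limiting value of $\frac{(1-u^\top v)^2}{\Vert P^\perp_v(u-v)\Vert^2}$ as $u\to v$, where numerator and denominator vanish together and the ratio tends to the reciprocal of the squared largest principal curvature encoded by the second fundamental form of $M$. Making this limit rigorous — verifying that the infimum over the open condition $u\ne v$ genuinely equals the supremal embedding radius and that no collision is overlooked — is the heart of the argument, and I would settle it by a compactness and continuity argument on the closed set of pairs, invoking Assumption \ref{as:M} to secure the required second-order regularity and the positivity $\theta_{\mathrm{c}}>0$.
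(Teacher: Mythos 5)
Your argument is correct: reducing no-self-overlap to the pointwise inequality $\tan\theta\,\nu^\top(u-v)\le 1-u^\top v$ for normal directions $\nu\in N_v(\co(M))$ and then eliminating $\nu$ via the cone-projection identity $\sup_{\nu}\nu^\top(u-v)=\Vert P^\perp_v(u-v)\Vert$ is precisely the standard derivation. The paper itself gives no proof of this proposition, deferring to Theorem 2.9 of Kuriki and Takemura (2009); your proposal reproduces that argument, including the correct observation that the local (focal/curvature) obstruction is captured as the limit $u\to v$ of the same ratio.
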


The local critical radius $\theta_{\mathrm{c,loc}}$ is defined as 
\begin{equation}
\label{critical-local}
 \tan^2\theta_{\mathrm{c,loc}} = \liminf_{u,v\in M,\,u\ne v,\,\Vert u-v\Vert\to 0} \frac{(1-u^\top v)^2}{\Vert P^\perp_v(u-v)\Vert^2}.
\end{equation}
From the definition, it holds that $\theta_{\mathrm{c}}\le\theta_{\mathrm{c,loc}}$.
In general, $\theta_{\mathrm{c,loc}}$ is easier to evaluate than $\theta_{\mathrm{c}}$.

We have summarized the volume-of-tube method to evaluate the upper tail probabilities of the maximum of random fields thus far.
There is another method utilized for the same purpose, known as the expected Euler-characteristic heuristic
 (\cite{Adler-Taylor07}, \cite{Worsley95}).
When applied to the Gaussian random field $\xi^\top u$, $u\in M$, this method is stated as follows.
For each $b$, define the excursion set by
\[
 A_b = \{ u\in M \mid \xi^\top u \ge b \}.
\]
Let $\chi(\cdot)$ be the Euler-Poincar\'e characteristic of a set,
and $\1(\cdot)$ be the indicator function for an event.
The expected Euler-characteristic heuristic assumes that
$\1(A_b\ne\emptyset) \approx \chi(A_b)$ for large $b$,
and
\[
 \Pr\biggl( \max_{u\in M} \xi^\top u \ge b \biggr) = \E[\1(A_b\ne\emptyset)] \approx \E[\chi(A_b)].
\]
Note that $\chi(A_b)$ can be evaluated by Morse's theorem, and is more tractable than $\1(A_b\ne\emptyset)$.
Takemura and Kuriki \cite{Takemura-Kuriki02} proved the equivalence of the volume-of-tube method and the expected Euler-characteristic heuristic as follows.

\begin{proposition}[Proposition 3.3 of Takemura and Kuriki \cite{Takemura-Kuriki02}]
\label{prop:equivalence}
\[
 \E[\chi(A_b)] =
 \PP(b)
\ \ \mbox{for all $b\ge 0$}.
\]
\end{proposition}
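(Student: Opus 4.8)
The plan is to compute $\E[\chi(A_b)]$ directly by Morse theory together with the Kac--Rice formula, and to match the result term by term with $\PP(b)$ in (\ref{p-hat}). For almost every $\xi$ the linear function $f(u)=\xi^\top u$ is a Morse function on $M$, so by Morse theory for a manifold with piecewise $C^2$ boundary the Euler characteristic of the super-level set $A_b=\{u\in M\mid f(u)\ge b\}$ is a signed count of critical points,
\[
 \chi(A_b)=\sum_{u\in\Int M:\,\nabla_M f(u)=0,\,f(u)\ge b}\mathrm{sign}\det\bigl(-\mathrm{Hess}_M f(u)\bigr)\;+\;(\text{contribution of critical points on }\partial M),
\]
where $\nabla_M$ and $\mathrm{Hess}_M$ are the intrinsic gradient and Hessian. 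The crucial structural fact, which I would record first, is that because $f$ is linear in $\xi$ the triple $\bigl(f(u),\nabla_M f(u),\mathrm{Hess}_M f(u)\bigr)$ is a jointly Gaussian functional of $\xi$ whose law is fixed by the embedding geometry of $M\subset\S^{n-1}$; this is precisely what will make the ensuing Kac--Rice integral exact, with no remainder, for every $b\ge 0$.

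Second, I would evaluate the interior sum. At a critical point the tangential part of $\xi$ vanishes, so $\xi=t\,u+\xi_N$ with $t=f(u)$ and $\xi_N\in N_u(\co(M))$, and a standard spherical computation gives $-\mathrm{Hess}_M f(u)=t\,I_d-H(u,\xi_N)$, where $H(u,\cdot)$ is the second fundamental form appearing in (\ref{w}). The sign weight therefore turns the Kac--Rice absolute value $|\det|$ into the genuine determinant $\det\bigl(t I_d-H(u,\xi_N)\bigr)$. Since $\xi^\top u$, $P_{T_uM}\xi$ and $\xi_N$ are independent standard Gaussians, the expectation factorizes; writing $\xi_N=r v$ with $v\in N_u(\co(M))\cap\S^{n-1}$, $r\ge 0$, and expanding
\[
 \det\bigl(t I_d-r\,H(u,v)\bigr)=\sum_{0\le e\le d}(-r)^e\,t^{d-e}\,\tr_e H(u,v),
\]
the $t$-integral $\int_b^\infty t^{d-e}e^{-t^2/2}\,\dd t$ produces $\G_{d+1-e}(b^2)$, while the radial $r$-integral and the Gaussian normalizations reassemble into the constant $1/(\Omega_{d+1-e}\Omega_{n-d-1+e})$ of (\ref{w}) (using $\Omega_m=2\pi^{m/2}/\Gamma(m/2)$ and the fact that the two sphere dimensions $d+1-e$ and $n-d-1+e$ sum to $n$). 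The restriction to even $e$ is automatic: at an interior point $N_u(\co(M))$ is a linear subspace, so $N_u(\co(M))\cap\S^{n-1}$ is symmetric under $v\mapsto-v$, whereas $\tr_e H(u,-v)=(-1)^e\tr_e H(u,v)$, so the $v$-integral annihilates every odd $e$. This reproduces the first sum of (\ref{p-hat}).

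Third, the boundary critical points are treated identically but on the $(d-1)$-manifold $\partial M$, with $H$ replaced by $H'$ of (\ref{w'}). The one essential difference is that a boundary point contributes to $\chi(A_b)$ only when $\nabla_M f$ points out of $M$, which confines the admissible direction $v$ to a half of the relevant normal sphere. This one-sided integration breaks the $v\mapsto-v$ symmetry, so now every $e$ with $0\le e\le d-1$ survives; the dimension shift from $d$ to $d-1$ converts the tails into $\G_{d-e}(b^2)$ with constants $1/(\Omega_{d-e}\Omega_{n-d+e})$, producing the second sum of (\ref{p-hat}) and completing the identity.

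The main obstacle is the boundary bookkeeping of the third step: making Morse theory for a manifold with piecewise $C^2$ boundary precise enough that the outward-pointing condition cuts the normal cone exactly in half and that the boundary index contributions assemble into $\det\bigl(-\mathrm{Hess}_{\partial M}f\bigr)$ without sign errors, while the normalizing constants match. In particular the corner strata, where $\partial M$ is only piecewise smooth and several faces meet, must be shown to contribute measure zero (or to cancel), and the Kac--Rice representation must be justified there. By comparison, once the parity argument is in place the interior computation of the second step is routine bookkeeping.
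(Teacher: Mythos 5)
The paper does not prove this proposition itself --- it is quoted verbatim from Takemura and Kuriki (2002) --- and your outline (Morse-theoretic signed count of critical points, Kac--Rice with the sign cancelling the absolute value of the determinant, the $v\mapsto -v$ parity argument killing odd $e$ on $\Int M$, and the half-normal-cone restriction at $\partial M$) is essentially the argument given in that reference. Your sketch is correct in approach and correctly identifies the boundary/corner bookkeeping as the delicate part; the one loose phrase is attributing the exactness for all $b\ge 0$ to Gaussianity, whereas it really comes from $\chi(A_b)$ being a signed count whose expectation the Kac--Rice metatheorem computes exactly (in contrast to $\Pr(\max\ge b)=\E[\1(A_b\ne\emptyset)]$, which is not such a count).
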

Using this, Takemura and Kuriki \cite{Takemura-Kuriki02} provided an alternative proof that the confidence band of Naiman \cite{Naiman86} is conservative.

\section{Main results}
\label{sec:main}

Recall that our aim is to derive the upper tail probability of the maximum of 
the Gaussian random field $Z(x,h)$ defined in (\ref{Z}), or equivalently,
the chi-square random process $Y(x)$ defined in (\ref{Y}).
The theorem below provides an answer, and proof is provided in the Appendix.

\begin{theorem}
\label{thm:main}
Let $\xi\sim \N_n(0,I)$, $n=p(k-1)$.
Let $\Gamma\subset\S^{p-1}$ and $M\subset\S^{n-1}$ be defined by (\ref{Gamma}) and (\ref{M}),
and let $|\Gamma|$ denote the length of $\Gamma$.
Assume Assumption \ref{as:psi} on $\psi$.
Then, as $b\to\infty$,
\begin{align*}
 \Pr\biggl(\max_{(x,h)\in\mathcal{X}\times\S^{k-2}} Z(x,h) \ge b \biggr)
=& \Pr\biggl(\max_{x\in\mathcal{X}} Y(x) \ge b^2 \biggr) \\
=& \Pr\biggl(\max_{u\in M}\xi^\top u \ge b \biggr) \\
=& \PP(b) + O\bigl(b^{n-2} e^{-(1+\tan^2\theta_{\mathrm{c}})b^2/2}\bigr),
\end{align*}
where
\begin{align}
\label{p-hat2}
\PP(b)
= \frac{\Gamma(\frac{k}{2})}{\sqrt{\pi}\,\Gamma(\frac{k-1}{2})} |\Gamma| \bigl\{\G_k(b^2) - \G_{k-2}(b^2) \bigr\}
 + \chi(\Gamma) \G_{k-1}(b^{2}).
\end{align}
Note that if $\Gamma$ (and hence $M$) has no boundary, then $\Gamma$ is homeomorphic to $\S^1$, and therefore $\chi(\Gamma)=0$.
Otherwise, $\chi(\Gamma)$ is the number of connected components of $\Gamma$.
\end{theorem}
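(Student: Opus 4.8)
The plan is to apply the tail‑probability formula (\ref{p})--(\ref{w'}) directly to the manifold $M$ of (\ref{M}), for which $d=\dim M=k-1$ and $n=p(k-1)$, and to show that the product structure $u=h\otimes\psi(x)$ collapses the general expression (\ref{p-hat}) to the three surviving terms in (\ref{p-hat2}). The identification $\max Z=\max_{u\in M}\xi^\top u$ and the chi‑square reformulation are already supplied by Section~\ref{sec:pivot}, so the whole task reduces to evaluating the interior weights $w_{d+1-e}$ and the boundary weights $w'_{d-e}$ of (\ref{w})--(\ref{w'}).

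First I would fix an orthonormal moving frame. Parametrizing $\Gamma$ by arc length and setting $e_1=\psi$, $e_2=\psi'$, and $e_3$ the spherical principal normal (so that $\psi''=-\psi+\kappa e_3$, with $\kappa$ the geodesic curvature), together with an orthonormal frame $h_1=h,h_2,\ldots,h_{k-1}$ of $\R^{k-1}$ adapted to $\S^{k-2}$, the base point is $u=e_1\otimes h_1$, and a unit tangent basis of $M$ is $e_2\otimes h_1$ (curve direction, call it $T_x$) together with $e_1\otimes h_b$, $b=2,\ldots,k-1$ (sphere directions $T_b$). Differentiating twice in $\R^n$ and projecting onto the normal space of $M$ inside $\S^{n-1}$, I expect to find that the only nonvanishing entries of the $d\times d$ matrix $H(u,v)$ are the $(x,x)$ entry $\kappa\langle e_3\otimes h_1,v\rangle$ and the mixed $(x,b)$ entries $\langle e_2\otimes h_b,v\rangle$, whereas the entire sphere block $\{(b,b')\}$ vanishes. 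This last vanishing is the geometric heart of the argument: for fixed $x$ the slice $\{h\otimes\psi(x)\mid h\in\S^{k-2}\}$ is a great subsphere of $\S^{n-1}$, hence totally geodesic.

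Consequently $H(u,v)$ is an arrowhead matrix with a zero $(k-2)\times(k-2)$ block, so $\tr_0 H=1$, $\tr_2 H=-\sum_{b}\langle e_2\otimes h_b,v\rangle^2$, and $\tr_e H=0$ for every $e\ge 3$. In the interior sum of (\ref{p-hat}), which runs over even $e$, only $e=0$ and $e=2$ therefore contribute; by the same totally‑geodesic property the boundary pieces $\partial M$ (copies of $\S^{k-2}$ at the endpoints of $\mathcal{X}$) have vanishing second fundamental form, so $\tr_e H'=0$ for $e\ge 1$ and only $e=0$ survives in the boundary sum. This already forces $\PP(b)=w_k\,\G_k(b^2)+w_{k-2}\,\G_{k-2}(b^2)+w'_{k-1}\,\G_{k-1}(b^2)$, which is the shape of (\ref{p-hat2}).

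It then remains to evaluate the three weights. Because the induced metric on $M$ is the product of the arc‑length metric on $\Gamma$ with the round metric on $\S^{k-2}$, one has $\Vol_{k-1}(M)=|\Gamma|\,\Omega_{k-1}$, giving $w_k=\Vol_{k-1}(M)/\Omega_k=|\Gamma|\,\Omega_{k-1}/\Omega_k=\frac{\Gamma(k/2)}{\sqrt{\pi}\,\Gamma((k-1)/2)}|\Gamma|$. For $w_{k-2}$ I would integrate $\tr_2 H(u,v)$ over the normal sphere $N_u(\co(M))\cap\S^{n-1}\cong\S^{n-k-1}$ via $\int_{\S^{m-1}}\langle w,v\rangle^2\,\dd v=\Omega_m/m$ with $m=n-k$; since the $k-2$ vectors $e_2\otimes h_b$ are orthonormal and lie in this normal space, the inner integral equals $-(k-2)\Omega_{n-k}/(n-k)$, and the recurrence $\Omega_{m+2}=(2\pi/m)\Omega_m$ then yields exactly $w_{k-2}=-w_k$. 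Finally, at a boundary point the support cone picks up the half‑line in the curve direction, so its polar $N_u(\co(M))$ becomes a half‑space of dimension $n-d$, whose unit sphere has volume $\tfrac12\Omega_{n-d}$; summing the resulting $\tfrac12$ over the two endpoints of each arc component of $\Gamma$ gives $w'_{k-1}=\#\{\text{arc components of }\Gamma\}=\chi(\Gamma)$, and $w'_{k-1}=0$ when $\Gamma$ is a closed loop. The strict positivity of $\theta_{\mathrm{c}}$ and the stated error term are inherited directly from (\ref{p}) under Assumption~\ref{as:psi}. I expect the main obstacle to be the frame computation that pins down the arrowhead/zero‑block form of $H(u,v)$, together with the correct description of the normal cone---in particular its degeneration to a half‑space at boundary points, which is precisely what converts the endpoint count into $\chi(\Gamma)$.
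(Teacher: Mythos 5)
Your proposal is correct and follows essentially the same route as the paper's proof: apply the tube formula to $M$, observe that the sphere slices are totally geodesic so the second fundamental form is an arrowhead matrix (giving only $e=0,2$ interior terms and $e=0$ boundary terms), and evaluate $w_k=-w_{k-2}=\Omega_{k-1}|\Gamma|/\Omega_k$ and $w'_{k-1}=\chi(\Gamma)$ from the product metric, the spherical second-moment integral, and the half-space normal cone at boundary points. The only differences are cosmetic (arc-length orthonormal frames versus general coordinates with metric normalization, and a direct moment formula versus the paper's Gaussian randomization trick for the normal-sphere integral).
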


\begin{theorem}
\label{thm:conservative}
Assume Assumption \ref{as:psi}.
Suppose that $\Gamma$ has boundaries.
The approximation formula given in Theorem \ref{thm:main} is a conservative bound, specifically,
\begin{align*}
 \Pr\biggl(\max_{u\in M}\xi^\top u \ge b \biggr) \le
 \PP(b)
\ \ \mbox{for all $b\ge 0$}.
\end{align*}
\end{theorem}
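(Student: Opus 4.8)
The plan is to deduce conservativeness from the equivalence between the tube formula and the expected Euler characteristic. By Proposition \ref{prop:equivalence}, $\PP(b) = \E[\chi(A_b)]$, where $A_b = \{u \in M \mid \xi^\top u \ge b\}$ is the excursion set of the Gaussian field on $M$. Since $\Pr(\max_{u \in M}\xi^\top u \ge b) = \E[\1(A_b \ne \emptyset)]$, it suffices to establish the almost-sure pointwise inequality $\1(A_b \ne \emptyset) \le \chi(A_b)$ for each fixed $b$ and then take expectations. This is precisely the structure of Naiman's inequality, and the work lies in verifying the pointwise bound from the geometry of $M$.

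Next I would exploit the product-like structure of $M$. Writing $\xi$ as the $p \times (k-1)$ matrix with columns $\xi_1,\ldots,\xi_{k-1}$ and setting $g(x) = (\xi_1^\top \psi(x),\ldots,\xi_{k-1}^\top \psi(x))^\top \in \R^{k-1}$, we have $Z(x,h) = h^\top g(x)$ and $\|g(x)\|^2 = Y(x)$. For fixed $x$, the slice $\{h \in \S^{k-2} \mid h^\top g(x) \ge b\}$ is empty when $Y(x) < b^2$ and is a spherical cap, hence contractible, when $Y(x) \ge b^2$. Thus $A_b$ projects onto the one-dimensional base excursion set $B_b = \{x \in \mathcal{X} \mid Y(x) \ge b^2\} \subset \Gamma$ with contractible cap fibers that shrink to the single point $\widehat h(x) = g(x)/\|g(x)\|$ on the level set $\{Y = b^2\}$. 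I would show that $A_b$ deformation-retracts onto the section $x \mapsto \widehat h(x)\otimes\psi(x)$ over $B_b$ by contracting each cap radially to its center along great circles, so that $A_b$ is homotopy equivalent to $B_b$; in particular $\chi(A_b) = \chi(B_b)$ and $A_b \ne \emptyset \iff B_b \ne \emptyset$.

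Then I would invoke Naiman's inequality in its one-dimensional form on $\Gamma$. Since $\Gamma$ has boundaries, Assumption \ref{as:psi} makes each connected component of $\Gamma$ homeomorphic to a closed interval. For each fixed $b$, with probability one the chi-square process $Y$ is smooth on $\Gamma$ and $b^2$ is a regular value of $Y$ (a fixed level is a critical value with probability zero), so $B_b$ is a finite disjoint union of closed subarcs. Each subarc is contractible with Euler characteristic $1$, hence $\chi(B_b)$ equals the number of subarcs: it is at least $1$ when $B_b \ne \emptyset$ and equals $0$ otherwise, giving $\1(B_b \ne \emptyset) \le \chi(B_b)$. Combining with the previous step yields $\1(A_b \ne \emptyset) \le \chi(A_b)$ almost surely, and taking expectations gives $\Pr(\max_{u \in M}\xi^\top u \ge b) \le \E[\chi(A_b)] = \PP(b)$ for all $b \ge 0$.

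The hard part will be making the fiber collapse rigorous: one must verify that the cap fibers vary continuously and degenerate regularly enough near $\{Y = b^2\}$ for the deformation retraction of $A_b$ onto the section over $B_b$ to exist, rather than merely asserting that the fibers are contractible. This requires that $g(x) \ne 0$ on $B_b$ (which holds since $Y(x) \ge b^2 > 0$ for $b > 0$) and that the center map $\widehat h$ extend continuously to the boundary, where the cap is a single point. The remaining care is to show that the exceptional configurations — non-smoothness of $Y$ or irregularity of the level $b^2$ — form a probability-zero event for each fixed $b$, using the almost-sure smoothness and non-degeneracy of $Y$ inherited from $\xi\sim\N_n(0,I)$ together with Assumption \ref{as:psi}; since these null events do not affect the expectation, the inequality is preserved for every $b \ge 0$.
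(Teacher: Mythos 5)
Your proposal is correct and follows essentially the same route as the paper: reduce to $\1(A_b\ne\emptyset)\le\chi(A_b)$ via Proposition \ref{prop:equivalence}, collapse the spherical-cap fibers over the one-dimensional base excursion set $B_b\subset\Gamma$ to their centers $h^*(q)=\Xi q/\Vert\Xi q\Vert$, and use that $\chi(B_b)$ counts connected components. The only difference is that the paper sidesteps your worries about regular values and almost-sure smoothness entirely by writing the explicit homotopy $(q,h,t)\mapsto\bigl(q,\{(1-t)h+th^*(q)\}/\Vert(1-t)h+th^*(q)\Vert\bigr)$, which is continuous for every fixed $\xi$ and every $b\ge 0$ (since $h^\top h^*(q)\ge 0$ on $\widetilde A_b$ the convex combination never vanishes), so no genericity or null-event argument is needed.
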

\begin{proof}
Arrange the $p(k-1)\times 1$ vector $\xi=(\xi_1^\top,\ldots,\xi_{k-1}^\top)^\top$, and define a $(k-1)\times p$ matrix $\Xi=(\xi_1,\ldots,\xi_{k-1})^\top$.
Let
\begin{align*}
A_b
 =& \{ u \in M \mid \xi^\top u \ge b \}
 =  \{ h\otimes q \mid (q,h) \in \Gamma\times\S^{k-2},\, h^\top \Xi q \ge b \}
 \subset \S^{p(k-1)-1}, \\
\widetilde A_b
 =& \{ (q,h) \in \Gamma\times\S^{k-2} \mid h^\top \Xi q \ge b \}
 \subset \S^{p-1}\times \S^{k-2}, \\
B_b
 =& \{ q \in \Gamma \mid q^\top\Xi^\top\Xi q \ge b^2 \} \subset \S^{p-1}.
\end{align*}
Note that $A_b$ is the excursion set of the Gaussian random field $\xi^\top u$, $u\in M$,
$\widetilde A_b$ is the excursion set of the Gaussian random field $\sum_{i=1}^{k-1} h_i (\xi_i^\top q) = h^\top\Xi q$, $(q,h)\in \Gamma\times\S^{k-2}$, and
$B_b$ is the excursion set of the chi-square random process $\sum_{i=1}^{k-1} (\xi_i^\top q)^2 = q^\top\Xi^\top\Xi q$, $q\in\Gamma$.
We will prove that for each fixed $\xi$,
$\1(A_b\ne\emptyset)=\1(\widetilde A_b\ne\emptyset)=\1(B_b\ne\emptyset)$ and
$\chi(A_b)=\chi(\widetilde A_b)=\chi(B_b)$.

First, note that owing to Assumption \ref{as:psi}, the map $(q,h)\mapsto h\otimes q$ is one-to-one.
Hence, $A_b$ and $\widetilde A_b$ are homeomorphic and therefore $\1(A_b\ne\emptyset)=\1(\widetilde A_b\ne\emptyset)$ and $\chi(A_b)=\chi(\widetilde A_b)$.

Moreover, noting that
$\widetilde A_b\ne\emptyset$ $\Leftrightarrow$ $\max_h h^\top\Xi q\ge b$ for some $q$ $\Leftrightarrow$ $q^\top\Xi\Xi^\top q \ge b^2$ for some $q$ $\Leftrightarrow$ $B_b\ne\emptyset$, that is, $\1(\widetilde A_b\ne\emptyset)=\1(B_b\ne\emptyset)$, we can write
\[
 \widetilde A_b = \bigsqcup_{q\in B_b}\{(q,h) \mid h\in\S^{k-2},\,h^\top\Xi q \ge b \}.
\]
Given $b\ge 0$, the set $\{ h\in\S^{k-2} \mid h^\top\Xi q \ge b \}$ is contractible and star-shaped about the point $h^*(q)=\Xi q/\Vert \Xi q\Vert$.
That is,
the map
\[
 \varphi : \widetilde A_b\times[0,1]\to\widetilde A_b,\quad (q,h,t) \mapsto \biggl(q,\frac{(1-t) h+t h^*(q)}{\Vert (1-t) h+t h^*(q)\Vert}\biggr)
\]
is continuous, and
$\varphi\bigl(\widetilde A_b\times\{0\}\bigr)=\widetilde A_b$ is homotopy equivalent to the set $\varphi\bigl(\widetilde A_b\times\{1\}\bigr)=\bigsqcup_{q\in B_b}\{(q,h^*(q))\}$.
This is homotopy equivalent to $\bigsqcup_{q\in B_b}\{q\} = B_b$.
Hence, $\chi(\widetilde A_b)=\chi(B_b)$.

Recall that $B_b$ is the excursion set of the chi-square random process on the one-dimensional index set $\Gamma$.
This means that $B_b$ is also one-dimensional, and $\chi(B_b)$ is only the number of connected components of $B_b$.
Therefore $\1(B_b\ne\emptyset)\le\chi(B_b)$.
By taking expectations,
\begin{align*}
 \Pr\biggl(\max_{u\in M}\xi^\top u \ge b \biggr) = \E[\1(A_b\ne\emptyset)] & = \E[\1(B_b\ne\emptyset)] \\
& \le \E[\chi(B_b)] = \E[\chi(A_b)] =
 \PP(b).
\end{align*}
The last equality is owing to Proposition \ref{prop:equivalence}.
\end{proof}

\begin{remark}
Naiman \cite{Naiman86} proved that application of the volume-of-tube method to a Gaussian random process with a one-dimensional index set always provides a conservative band.
Theorem \ref{thm:conservative} is a generalization of Naiman's \cite{Naiman86} inequality to a chi-square random process.
\end{remark}

\begin{theorem}
\label{thm:critical_radius}
The interior and boundary of $\Gamma$ are denoted by $\Int\Gamma$ and $\partial\Gamma$, respectively.
The critical radius $\theta_{\mathrm{c}}$ of $M$ is given by
\[
 \tan^2\theta_{\mathrm{c}} = \min\biggl\{ \inf_{x\ne \tilde x,\,\psi(x)\in\Int\Gamma} \frac{(1-\alpha s)^2}{1-s^2-\alpha^2 t^2},
 \inf_{x\ne \tilde x,\,\psi(x)\in\partial\Gamma} \frac{(1-\alpha s)^2}{1-s^2-\max\{0,\varepsilon(x)\alpha t\}^2} \biggr\},
\]
where the infima are taken over $x,\tilde x\in\mathcal{X}$, and $\alpha\in[-1,1]$ as well as additional conditions (arguments of\/ $\inf$), and
\[
 s=s(x,\tilde x)=\psi(x)^\top\psi(\tilde x), \quad
 t=t(x,\tilde x)=\frac{\psi_x(x)^\top\psi(\tilde x)}{\Vert\psi_x(x)\Vert},
\]
$\psi_x(x)=\partial\psi(x)/\partial x$,
\[
\varepsilon(x) = \begin{cases}
 1 & (\mbox{$\psi_x(x)$ is inward to $\Gamma$}), \\
 -1 & (\mbox{$\psi_x(x)$ is outward to $\Gamma$}).
 \end{cases}
\]
$\psi_x(x)$ is said to be inward or outward to $\Gamma$ if
the support cone of $\Gamma$ at $\psi(x)$ is
$S_{\psi(x)}\Gamma=\{\lambda\psi_x(s) \mid \lambda\ge 0\}$ or 
$\{\lambda\psi_x(s) \mid \lambda\le 0\}$, respectively.
\end{theorem}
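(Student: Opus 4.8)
The plan is to specialize the general critical-radius characterization \eqref{critical} to the set $M$ in \eqref{M}, exploiting its product structure $\S^{k-2}\times\Gamma$ carried by the Kronecker map $(h,x)\mapsto h\otimes\psi(x)$, which is one-to-one under Assumption~\ref{as:psi}. I would write a generic ordered pair of distinct points of $M$ as the base point $v=h\otimes\psi(x)$ (at which the normal cone is taken) and $u=\tilde h\otimes\psi(\tilde x)$. Since $u^\top v=(h^\top\tilde h)(\psi(x)^\top\psi(\tilde x))=\alpha s$ with $\alpha=h^\top\tilde h\in[-1,1]$ and $s=s(x,\tilde x)$, the numerator of \eqref{critical} is immediately $(1-\alpha s)^2$, and the whole task reduces to computing the denominator $\Vert P_v^\perp(u-v)\Vert^2$ according to whether the base parameter satisfies $\psi(x)\in\Int\Gamma$ or $\psi(x)\in\partial\Gamma$.

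The geometric input is the support cone. Using $S_v(\co(M))=S_vM\oplus\spn\{v\}$ from Section~\ref{sec:tube} together with the product structure, at an interior base point the tangent directions are $\dot h\otimes\psi(x)$ ($\dot h\perp h$) from the sphere factor and $h\otimes\psi_x(x)$ from $\Gamma$; adjoining $\spn\{v\}=\spn\{h\otimes\psi(x)\}$ fuses the sphere directions into the full slot $\{w\otimes\psi(x):w\in\R^{k-1}\}$, so that $S_v(\co(M))=\{w\otimes\psi(x):w\in\R^{k-1}\}\oplus\spn\{h\otimes\psi_x(x)\}$ is a linear subspace. At a boundary base point the sole change is that Assumption~\ref{as:psi} makes the $\Gamma$-direction one-sided, namely the support cone $S_{\psi(x)}\Gamma$, so that summand becomes the half-line $\{h\otimes\lambda\psi_x(x):\varepsilon(x)\lambda\ge 0\}$; this is exactly where $\varepsilon(x)$ enters. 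In either case $S_v(\co(M))$ is a closed convex cone, and the orthogonal (Moreau) decomposition of $u-v$ into its support-cone and normal-cone components gives $\Vert P_v^\perp(u-v)\Vert^2=\Vert u-v\Vert^2-\Vert P_{S_v(\co(M))}(u-v)\Vert^2$, which is the form I would evaluate.

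To compute $\Vert P_{S_v(\co(M))}(u-v)\Vert^2$ I would fix the orthonormal frame $\phi=\psi(x)$ and $\phi_1=\psi_x(x)/\Vert\psi_x(x)\Vert$ (orthogonal since $\Vert\psi\Vert\equiv1$ forces $\psi^\top\psi_x=0$), expand $\psi(\tilde x)=s\phi+t\phi_1+r$ with $r\perp\phi,\phi_1$ and $\Vert r\Vert^2=1-s^2-t^2$ (so that $t=\psi(\tilde x)^\top\phi_1$ is precisely $t(x,\tilde x)$), and split $\tilde h=\alpha h+\tilde h^\perp$ with $\Vert\tilde h^\perp\Vert^2=1-\alpha^2$. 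Then $u-v=(s\tilde h-h)\otimes\phi+\alpha t\,(h\otimes\phi_1)+t\,(\tilde h^\perp\otimes\phi_1)+\tilde h\otimes r$, where the last two terms are orthogonal to $S_v(\co(M))$ while the first lies in its slot $\{w\otimes\phi\}$. Projecting onto $S_v(\co(M))$ thus keeps $(s\tilde h-h)\otimes\phi$ together with the projection of the scalar $\alpha t$ onto the admissible $h\otimes\phi_1$-directions, which is all of $\R$ in the interior case (retaining $\alpha t$) and the ray $\{\lambda:\varepsilon(x)\lambda\ge 0\}$ in the boundary case (retaining $\max\{0,\varepsilon(x)\alpha t\}$). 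Since $\Vert u-v\Vert^2=2(1-\alpha s)$ and $\Vert(s\tilde h-h)\otimes\phi\Vert^2=1-2\alpha s+s^2$, subtraction yields $\Vert P_v^\perp(u-v)\Vert^2=1-s^2-\alpha^2t^2$ at interior base points and $1-s^2-\max\{0,\varepsilon(x)\alpha t\}^2$ at boundary base points, exactly the two denominators in the statement.

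Assembling \eqref{critical}, I would let $(x,\tilde x)$ range over $\mathcal{X}$ and $\alpha$ over $[-1,1]$; every pair $(h,\tilde h)$ with $h^\top\tilde h=\alpha$ gives the same ratio and, because $k\ge 3$, such pairs exist for each $\alpha$, so the infimum over $h,\tilde h$ collapses to one over $\alpha$. The degenerate pairs with $x=\tilde x$ but $h\ne\tilde h$ satisfy $u-v\in\{w\otimes\phi\}\subset S_v(\co(M))$, hence have vanishing denominator and infinite ratio, and are harmlessly excluded by the restriction $x\ne\tilde x$. I expect the boundary computation to be the main obstacle: one must verify rigorously that the one-sided structure of $S_{\psi(x)}\Gamma$ propagates to the half-line summand of $S_v(\co(M))$, fix the orientation convention encoded by $\varepsilon(x)$ so that the retained part is precisely $\max\{0,\varepsilon(x)\alpha t\}$, justify the Moreau decomposition for this non-linear support cone, and check that the finitely many points where $\psi$ fails to be $C^2$ (permitted by the piecewise-$C^2$ hypothesis) do not lower the infimum, so that only genuine interior and boundary points of $\Gamma$ contribute.
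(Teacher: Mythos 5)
Your proposal is correct and follows essentially the same route as the paper: both specialize the general characterization (\ref{critical}) to $M=\S^{k-2}\otimes\Gamma$, identify $S_v(\co(M))$ as $\{w\otimes\psi(x)\}$ plus the line $\spn\{h\otimes\psi_x(x)\}$ at interior points and the $\varepsilon(x)$-oriented half-line at boundary points, and reduce the denominator to $1-s^2-\alpha^2t^2$ or $1-s^2-\max\{0,\varepsilon(x)\alpha t\}^2$ (the paper writes out the projection operator explicitly, whereas you reach the same quantities via an adapted orthonormal frame and the Moreau/Pythagoras identity). Your added remarks on collapsing the infimum over $(h,\tilde h)$ to one over $\alpha$ and on discarding the degenerate pairs $x=\tilde x$ match the paper's closing observations.
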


\begin{theorem}
\label{thm:local_critical_radius}
Assume Assumption \ref{as:psi}.
Moreover, assume that $\psi:\mathcal{X}\to\S^{p-1}$ is of $C^4$-class.
Then, the local critical radius $\theta_{\mathrm{c,loc}}$ is given by
\[
\tan^{2}\theta_{\mathrm{c,loc}} = \min\biggl\{
 \inf_{x\in\mathcal{X}:\kappa(x)\le 2} \biggl\{1-\frac{\kappa(x)}{4}\biggr\},
 \inf_{x\in\mathcal{X}:\kappa(x)\ge 2} \frac{1}{\kappa(x)} \biggr\}
\]
with
\begin{equation}
\label{kappa}
 \kappa(x) = \frac{\psi_{xx}(x)^\top\psi_{xx}(x)}{\{\psi_x(x)^\top\psi_x(x)\}^2}
  -\frac{\{\psi_{xx}(x)^\top\psi_x(x)\}^2}{\{\psi_x(x)^\top\psi_x(x)\}^3} -1,
\end{equation}
where $\psi_x(x)=\partial\psi(x)/\partial x$ and $\psi_{xx}(x)=\partial^2\psi(x)/\partial x^2$.
\end{theorem}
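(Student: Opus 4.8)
The plan is to reduce the computation of $\theta_{\mathrm{c,loc}}$ to the second fundamental form of $M$ in $\S^{n-1}$, and then to carry out an explicit one-parameter optimization that exploits the product structure $u=h\otimes\psi(x)$ of $M$.

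First I would localize the defining expression (\ref{critical-local}). Fix an interior point $v=\gamma(0)$ of $M$ and a unit-speed $C^2$ curve $\gamma(t)\in M$, and write $w=\gamma'(0)$ with $\|w\|=1$. The constraint $\|\gamma(t)\|\equiv 1$ gives $\gamma'(0)\perp v$ and $\gamma''(0)^\top v=-1$, so $1-\gamma(t)^\top v=\tfrac{t^2}{2}+O(t^3)$. Since the normal cone $N_v(\co(M))$ is the orthogonal complement of $\spn\{v\}\oplus T_v M$, the projection $P_v^\perp$ annihilates the tangential term $t\gamma'(0)$ and the radial part of $\gamma''(0)$, leaving $P_v^\perp(\gamma(t)-v)=\tfrac{t^2}{2}B(w,w)+O(t^3)$, where $B(w,w)$ is the vector-valued second fundamental form of $M$ regarded as a submanifold of $\S^{n-1}$. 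Hence the ratio in (\ref{critical-local}) tends to $\|w\|^4/\|B(w,w)\|^2$, and after taking the liminf the target is
\[
 \tan^2\theta_{\mathrm{c,loc}}=\inf_{v\in\Int M}\ \inf_{w\in T_vM,\,\|w\|=1}\frac{1}{\|B(w,w)\|^2}.
\]

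Next I would compute $B(w,w)$ for this $M$. Parametrizing $\Gamma$ by arc length so that $\|\psi_x\|=1$ (legitimate because $\kappa(x)$ in (\ref{kappa}) is invariant under reparametrization), every unit tangent vector at $v=h\otimes\psi(x)$ takes the form $w=\cos\phi\,(h\otimes\psi_x)+\sin\phi\,(e\otimes\psi)$ with $e\in T_h\S^{k-2}$, $\|e\|=1$. Differentiating a curve $\gamma(s)=h(s)\otimes\psi(x(s))$ twice and discarding the tangential and radial parts yields
\[
 B(w,w)=2\sin\phi\cos\phi\,(e\otimes\psi_x)+\cos^2\phi\,(h\otimes\psi_{xx}^\perp),
\]
where $\psi_{xx}^\perp$ is the component of $\psi_{xx}$ orthogonal to $\spn\{\psi,\psi_x\}$. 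Using $\psi^\top\psi_x=0$ and $\psi^\top\psi_{xx}=-1$ one checks that the two summands are mutually orthogonal and, crucially, that $\|\psi_{xx}^\perp\|^2=\kappa(x)$; consequently $\|B(w,w)\|^2=4\sin^2\phi\cos^2\phi+\kappa(x)\cos^4\phi$. Setting $c=\cos^2\phi\in[0,1]$, the quantity to be maximized is $g(c)=4c+(\kappa-4)c^2$. Its interior stationary point $c^\ast=2/(4-\kappa)$ lies in $[0,1]$ exactly when $\kappa\le 2$, giving $\max g=4/(4-\kappa)$; when $\kappa\ge 2$ the maximum is attained at the endpoint $c=1$, giving $\max g=\kappa$. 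Taking reciprocals turns $\inf_\phi\|B\|^{-2}$ into $1-\kappa(x)/4$ on $\{\kappa\le 2\}$ and $1/\kappa(x)$ on $\{\kappa\ge 2\}$, and the outer infimum over $x$ produces precisely the stated formula.

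The main obstacle I anticipate is analytic rather than algebraic: justifying that the liminf in (\ref{critical-local}) really equals the pointwise infimum of $\|B(w,w)\|^{-2}$. One must show that the leading-order expansion is uniform over $M$, and that configurations in which $u$ and $v$ straddle a common limit point (where the $O(t^2)$ term of $u-v$ can cancel) still yield the same ratio once the projection is taken onto the normal cone at the shifted base point; this is where the $C^4$ hypothesis is used, to control the higher-order remainders. I would also verify that boundary points $\psi(x)\in\partial\Gamma$ do not lower the liminf — consistent with the absence of any boundary or $\varepsilon(x)$ term here, in contrast with Theorem \ref{thm:critical_radius} — because an inward tangent direction lies in the support cone and is therefore annihilated by $P_v^\perp$, so that the leading normal contribution is again governed by $B(w,w)$.
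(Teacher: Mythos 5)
Your computation is correct and reproduces the stated formula, but by a genuinely different route from the paper. The paper never introduces the second fundamental form of $M$ here: it simply restricts the explicit two-point ratio $(1-\alpha s)^2/(1-s^2-\alpha^2t^2)$ from Theorem \ref{thm:critical_radius} to the local regime, Taylor-expands $s$, $r$, $t$ to fourth order in $\Delta=\tilde x-x$ (using the identities $\psi_{xx}^\top\psi=-g$, $\psi_{xxx}^\top\psi=-3\gamma$, etc., which is where $C^4$ enters, since the leading term of $1-s^2-t^2$ is $\tfrac14\kappa g^2\Delta^4$), and then minimizes over the relative rate $c=\lim\delta/(g\Delta^2)$ of the two small parameters; the two regimes $c\in[0,\infty)$ and $\Delta^2/\delta\to0$ yield $(1+2c)^2/(\kappa+8c)$ and $\infty$, and minimizing over $c\ge0$ produces exactly the dichotomy $\kappa\le 2$ versus $\kappa\ge 2$. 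Your direction angle $\phi$ encodes the same information (with $g=1$, $\delta\sim\tfrac12 t^2\sin^2\phi$ and $\Delta\sim t\cos\phi$, so your $\cos^2\phi=2/(4-\kappa)$ is the paper's $c=(2-\kappa)/4$), and your identification $\Vert\psi_{xx}^\perp\Vert^2=\kappa$ and the quadratic $4c+(\kappa-4)c^2$ check out. What your approach buys is a coordinate-free interpretation of $\kappa$ as the squared geodesic curvature of $\Gamma$ and a formulation that would extend to higher-dimensional index sets; what it costs is the lemma you yourself flag, namely that the liminf in (\ref{critical-local}) equals $\inf_{v\in M}\inf_{\Vert w\Vert=1}\Vert B(w,w)\Vert^{-2}$. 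That reduction is true for compact $C^3$ pieces (and your remark that degenerate directions with $B(w,w)\to0$ only push the ratio to $\infty$ is the right observation), but as written it is asserted rather than proved; the paper sidesteps it entirely because its Theorem \ref{thm:critical_radius} already expresses the two-point ratio in closed form in $(\Delta,\delta)$, so the liminf is computed directly without any appeal to a general curvature characterization. Your treatment of boundary points is also consistent with the paper's formula (the $\max\{0,\varepsilon\alpha t\}$ term can only increase the denominator's complement, hence the ratio), though the paper's own proof is silent on this point. To make your argument a complete proof you would need to supply the uniformity of the second-order expansion over $M$ and the argument for pairs $(u,v)$ both converging to a common point — or simply quote the paper's Theorem \ref{thm:critical_radius} and localize, which is what the paper does.
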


The proofs of Theorems \ref{thm:critical_radius} and \ref{thm:local_critical_radius} are included in the Appendix.

\subsubsection*{A numerical example}

At the end of this section, we provide a numerical example
to determine the accuracy of the approximation formula given in Theorem \ref{thm:main}, and degree of conservativeness proved by Theorem \ref{thm:conservative}.

Suppose that $f(x)=(1,x,x^2)^\top$, $\mathcal{X}=[-1,1]$, and
\[
 \Sigma = \begin{pmatrix} 1 & 0 & \frac{2}{3} \\ 0 & \frac{2}{3} & 0 \\ \frac{2}{3} & 0 & 1 \end{pmatrix}, \quad
 \Sigma^{1/2} = \begin{pmatrix}
 1 & 0 & \frac{2}{3} \\ 0 & \sqrt{\frac{2}{3}} & 0 \\ 0 & 0 & \frac{\sqrt{5}}{3} \end{pmatrix}.
\]
Then,
\begin{align*}
%
& \psi(x) = \frac{1}{3(1+x^2)} \bigl(3+2x^2,\sqrt{6}x,\sqrt{5}x^2\bigr)^\top, \quad
 |\Gamma| = \int_{\mathcal{X}} \Vert\dot\psi(x)\Vert \,\dd x = 
 \int_{-1}^1 \sqrt{\frac{2}{3}}\frac{1}{1+x^2} \,\dd x = \frac{\pi}{\sqrt{6}}.
\end{align*}
$\kappa(x)$ in (\ref{kappa}) is always $5$.
Hence, the local critical radius is $\theta_{\mathrm{c,loc}}=\tan^{-1}(1/\sqrt{5})=0.134 \pi$. 
Further, we can also confirm that the critical radius is the same as $\theta_{\mathrm{c}}=\theta_{\mathrm{c,loc}}$ using Mathematica \cite{Mathematica}.

Under this setting, we suppose the case of $k=3$.
The probability we need is
\begin{equation}
\label{maxY}
 \Pr\biggl( \max_{x\in[-1,1]}Y(x)\ge b^2 \biggr)
 = 1-\Pr\bigl(T(x,c)\le b,\ \forall x\in[-1,1],\,\forall c\in\mathcal{C}\bigr),
\end{equation}
where
\[
 Y(x)=\sum_{i=1}^2 \{\xi_i^\top\psi(x)\}^2,\ \ \xi_1,\xi_2\sim \N_3(0,I)\ \rm i.i.d.
\]
is a chi-square random process $Y(x)$ with two degrees of freedom.
The tube formula for the upper tail probability (\ref{maxY}) is
\begin{equation}
\label{maxY-hat}
 \PP(b)
= \frac{\pi}{2\sqrt{6}} \bigl\{\G_3(b^2)-\G_1(b^2)\bigr\} + \G_2(b^2)
= \biggl(\frac{\sqrt{\pi}}{2\sqrt{3}} b +1\biggr) e^{-b^2/2}.
\end{equation}
Figure \ref{fig:tail_prob} depicts the upper tail probability of the maximum (\ref{maxY}) and its approximate value (\ref{maxY-hat}).
We can see that the tube formula approximates the true upper tail probability with sufficient accuracy in the moderate tail regions (for example, the upper probability is less than 0.2),
and it provides a conservative bound as per Theorem \ref{thm:conservative}.

\begin{figure}[h]
\begin{center}
\includegraphics[width=0.65\linewidth]{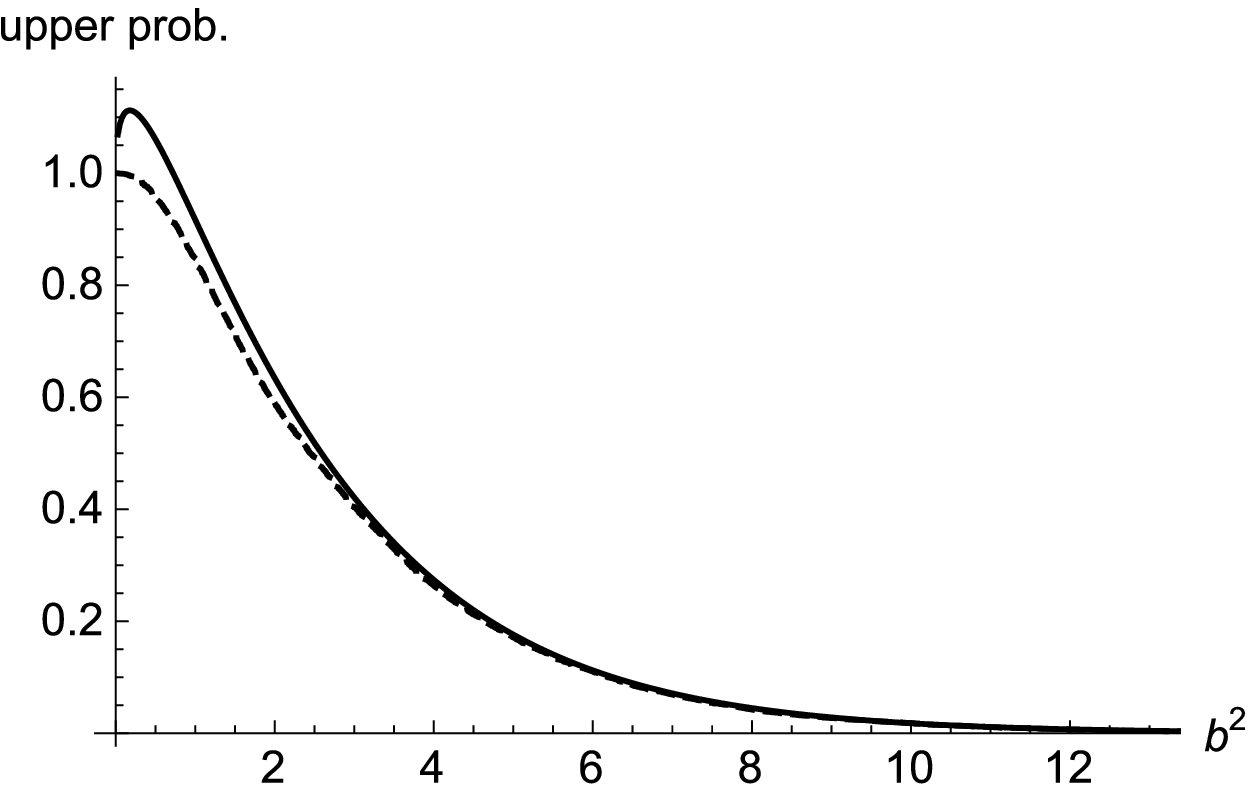} 
\caption{Upper tail probability of the maximum of chi-square process $Y(x)$.}

\small
\vspace*{1mm}
(solid line: tube formula, dashed line: Monte Carlo with 10,000 replications)

\label{fig:tail_prob}
\end{center}
\end{figure}

We have proposed that the threshold for the confidence band should be determined as the solution $b=b_{\mathrm{tube},1-\alpha}$ for $\PP(b)=\alpha$.
Figure \ref{fig:confidence_coeff} depicts the actual confidence coefficient (coverage probability) 
\[
 \Pr\biggl( \max_{x\in[-1,1]}Y(x)\ge b_{\mathrm{tube},1-\alpha}^2 \biggr), \ \ \alpha\in[0,1].
\]
This further demonstrates that the confidence bands obtained by the tube method are always conservative and very accurate.

\begin{figure}[h]
\begin{center}
\includegraphics[width=0.65\linewidth]{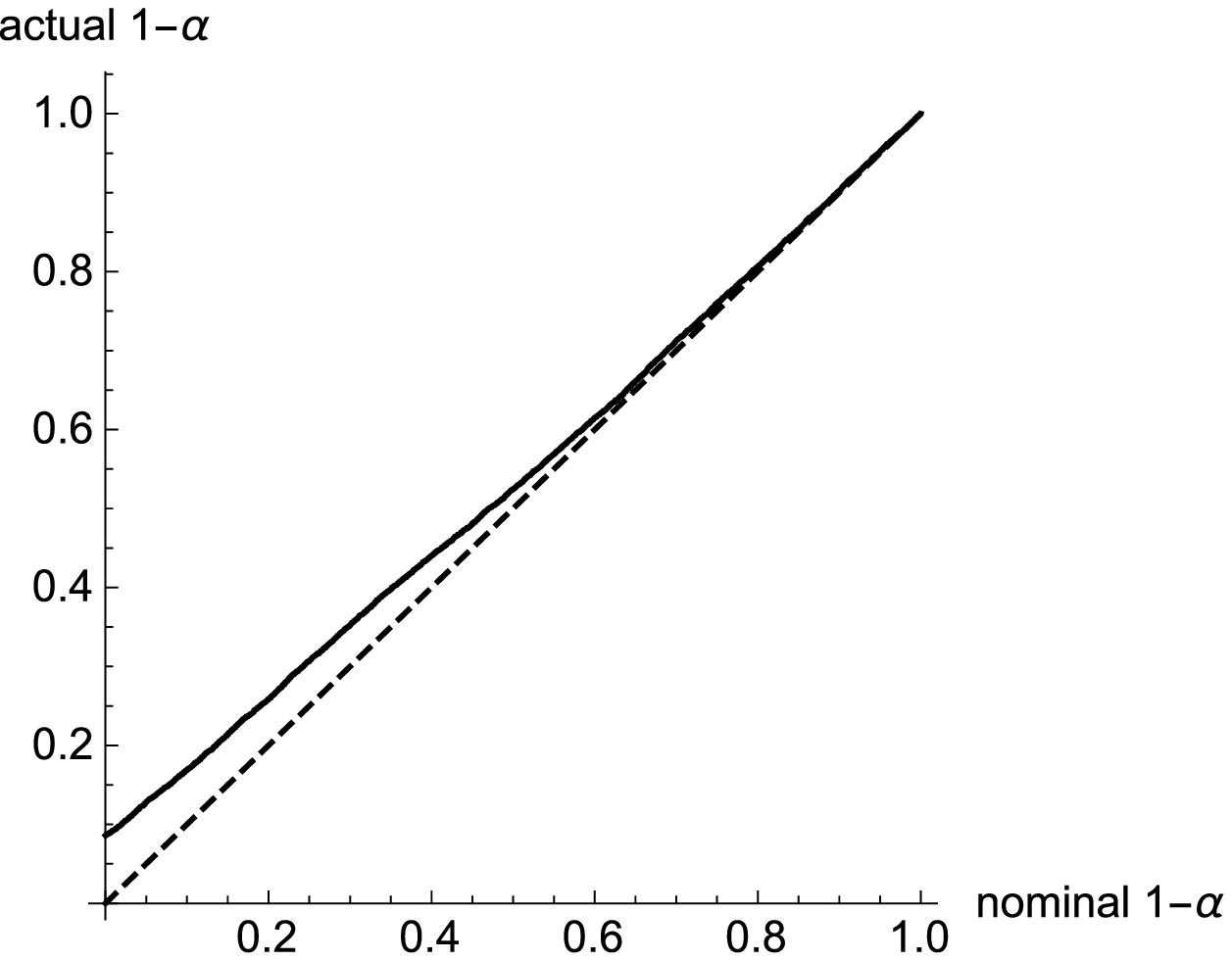}
\caption{Nominal confidence coefficient vs.\ Actual confidence coefficient.}

\small
\vspace*{1mm}
\footnotesize
(solid line: actual confidence coefficient, dashed line: 45-degree line)

\label{fig:confidence_coeff}
\end{center}
\end{figure}

\section{Simulation study under model misspecification}
\label{sec:simulation}

Throughout this paper, it is assumed that the nonlinear model has a finite number of basis functions $g_i(x)=\beta_i^\top f(x)$ in (\ref{beta_f}).
However, we can only approximate the true model in practice.
Under a slight misspecification of the model, Sun and Loader \cite{Sun-Loader94} estimated the bias of the coverage probability, and proposed an adjustment to the volume-of-tube formula.
Although their approach may be applied to our model, the result would be more complicated.
Instead, to investigate what happens under model misspecification, we conducted a Monte Carlo simulation study in the following setting.

The domain of explanatory variable is set to be $\mathcal{X}=[0,1]$.
The data are generated from the model
\[
 y_{ij} = g_i(x_j) + \varepsilon_{ij}, \ \ \varepsilon_{ij}\sim \N(0,1), \ \ i=1,\ldots,k,\ \ j=1,\ldots,n,
\]
where $k=3$, $n=11$, and $x_j=(j-1)/n$, $j=1,\ldots,n$.
As the true regression curve is $g_i(x)$, we assume three models.

Model 1:
\[
 g_i(x) = \beta_i^\top f_{2,5,0,1}(x), \quad \beta_1=(0,\ldots,0)^\top, \ \ \beta_2= K(0,0,1/2,1,1)^\top, \ \ \beta_3= K(0,0,4/3,0,0)^\top,
\]
where $K=1$, $3$, or $9$,
\[
 f_{d,m,a,b} = \left( B_d\biggl(\frac{x-a}{b-a}(m-d)-(i-d-1)\biggr) \right)_{i=1,\ldots,m},
\]
and $B_d(\cdot)$ is the B-spline function
\begin{equation}
\label{Bd}
 B_d(x) = \sum_{r=0}^{d+1} (-1)^{d+1-r} {d+1 \choose r} \frac{(r-x)_+^d}{d!}
\end{equation}
(\cite{deBoor78}, p.\,89).

Model 2:
\[
 g_1(x) = 0, \quad g_2(x) = K\sin(x\pi/2), \quad g_3(x) = K\sin(x\pi), \quad K=1,3,9.
\]

Model 3:
\[
 g_1(x) = 0, \quad g_2(x) = K\frac{e^{-x/2}-e^{-x}}{e^{-1/2}-e^{-1}}, \quad g_3(x) = K\frac{\cosh(x-1/2)-1}{\cosh(1/2)-1}, \quad K=1,3,9.
\]
For all models, $g_2(x)$ is unimodal, and $g_3(x)$ is increasing.
$g_2(x)$ and $g_3(x)$ are designed to have the range $[0,K]$.

We fit the curve $\beta_i^\top f_{2,m,0,1}(x)$ to the generated data $y_{ij}$, where $m=3,\ldots,10$.
Using these models, we constructed a $1-\alpha=0.95$ confidence band.
Coverage probabilities were estimated based on Monte Carlo simulations with 1,000,000 replications, and are summarized in Table \ref{tab:coverage_prob}.
In this table,
\begin{equation}
\label{delta}
 \delta = \max_{x\in\mathcal{X},\,c\in\mathcal{C}}
\left| \frac{\sum_{i=1}^k c_i \{(\beta_i^*)^\top f_{2,m,0,1}(x) - g_i(x)\}}{\sqrt{f_{2,m,0,1}(x)^\top\Sigma f_{2,m,0,1}(x)}} \right|,
\quad
\Sigma = \left(\sum_{i=1}^n f_{2,m,0,1}(x_i)^\top f_{2,m,0,1}(x_i)\right)^{-1}
\end{equation}
is the bias of regression function,
where $\beta_i^*$ is the best parameter in the assumed model $\beta_i^\top f_{2,m,0,1}(x)$.
\begin{equation}
\label{Delta}
 \Delta = \max\bigl\{ \alpha-\PP(b_{\mathrm{tube},1-\alpha}+\delta), \PP(b_{\mathrm{tube},1-\alpha}-\delta)-\alpha \bigr\}
\end{equation}
is an approximate upper bound of the bias of coverage probability,
where $b_{\mathrm{tube},1-\alpha}$ is the approximate value of $b_{1-\alpha}$ obtained by the tube method.  (See \ref{sec:bound} for the detail.)
 
\begin{table}
\begin{center}
\caption{Coverage probability under model misspecification ($1-\alpha=0.95$)}
\label{tab:coverage_prob}
{\footnotesize (prob: coverage probability, $\delta$: bias (\ref{delta}), $\Delta$: bound for coverage probability bias (\ref{Delta}))}

\bigskip
\begingroup
\renewcommand{\arraystretch}{1.1}
\begin{small}
\begin{tabular}{cccccccccccccc}
\hline
\multirow{2}{2ex}{$m$} && \multicolumn{3}{c}{Model 1 ($K=1$)} && \multicolumn{3}{c}{Model 1 ($K=3$)} && \multicolumn{3}{c}{Model 1 ($K=9$)} \\
\cline{3-5} \cline{7-9} \cline{11-13}
&& prob & $\delta$ & $\Delta$ && prob & $\delta$ & $\Delta$ && prob & $\delta$ & $\Delta$ \\
\hline
3 && 0.9365 & 0.4692 & 0.1155 && 0.7872 & 1.4076 & 0.8240 && 0.0000 & 4.2227 & 1.3542 \\
4 && 0.9422 & 0.3996 & 0.0965 && 0.8509 & 1.1987 & 0.6909 && 0.0076 & 3.5961 & 1.6907 \\
5 && 0.9512 & 0.0000 & 0.0000 && 0.9512 & 0.0000 & 0.0000 && 0.9512 & 0.0000 & 0.0000 \\
6 && 0.9511 & 0.1006 & 0.0176 && 0.9477 & 0.3018 & 0.0694 && 0.9111 & 0.9053 & 0.4550 \\
7 && 0.9514 & 0.0448 & 0.0074 && 0.9509 & 0.1343 & 0.0251 && 0.9465 & 0.4030 & 0.1100 \\
8 && 0.9515 & 0.0000 & 0.0000 && 0.9515 & 0.0000 & 0.0000 && 0.9515 & 0.0000 & 0.0000 \\
9 && 0.9515 & 0.0175 & 0.0029 && 0.9514 & 0.0526 & 0.0090 && 0.9504 & 0.1578 & 0.0316 \\
10&& 0.9516 & 0.0218 & 0.0036 && 0.9515 & 0.0653 & 0.0116 && 0.9508 & 0.1959 & 0.0421 \\
\hline
\end{tabular}

\bigskip

\begin{tabular}{cccccccccccccc}
\hline
\multirow{2}{2ex}{$m$} && \multicolumn{3}{c}{Model 2 ($K=1$)} && \multicolumn{3}{c}{Model 2 ($K=3$)} && \multicolumn{3}{c}{Model 2 ($K=9$)} \\
\cline{3-5} \cline{7-9} \cline{11-13}
&& prob & $\delta$ & $\Delta$ && prob & $\delta$ & $\Delta$ && prob & $\delta$ & $\Delta$ \\
\hline
3 && 0.9509 & 0.06491 & 0.0099 && 0.9486 & 0.1947 & 0.0346 && 0.9277 & 0.5842 & 0.1640 \\
4 && 0.9511 & 0.04999 & 0.0077 && 0.9498 & 0.1500 & 0.0264 && 0.9374 & 0.4499 & 0.1157 \\
5 && 0.9512 & 0.01271 & 0.0019 && 0.9511 & 0.0381 & 0.0060 && 0.9504 & 0.1143 & 0.0199 \\
6 && 0.9516 & 0.00494 & 0.0008 && 0.9516 & 0.0148 & 0.0023 && 0.9515 & 0.0445 & 0.0072 \\
7 && 0.9514 & 0.00234 & 0.0004 && 0.9514 & 0.0070 & 0.0011 && 0.9514 & 0.0211 & 0.0034 \\
8 && 0.9515 & 0.00137 & 0.0002 && 0.9515 & 0.0041 & 0.0007 && 0.9515 & 0.0123 & 0.0020 \\
9 && 0.9515 & 0.00119 & 0.0002 && 0.9515 & 0.0036 & 0.0006 && 0.9515 & 0.0107 & 0.0017 \\
10&& 0.9516 & 0.00076 & 0.0001 && 0.9516 & 0.0023 & 0.0004 && 0.9516 & 0.0068 & 0.0011 \\
\hline
\end{tabular}

\bigskip

\begin{tabular}{cccccccccccccc}
\hline
\multirow{2}{2ex}{$m$} && \multicolumn{3}{c}{Model 3 ($K=1$)} && \multicolumn{3}{c}{Model 3 ($K=3$)} && \multicolumn{3}{c}{Model 3 ($K=9$)} \\
\cline{3-5} \cline{7-9} \cline{11-13}
&& prob & $\delta$ & $\Delta$ && prob & $\delta$ & $\Delta$ && prob & $\delta$ & $\Delta$ \\
\hline
3 && 0.9512 & 0.02384 & 0.0034 && 0.9508 & 0.07152 & 0.0109 && 0.9483 & 0.2146 & 0.0390 \\
4 && 0.9513 & 0.00766 & 0.0011 && 0.9512 & 0.02299 & 0.0034 && 0.9511 & 0.0690 & 0.0109 \\
5 && 0.9512 & 0.00218 & 0.0003 && 0.9512 & 0.00653 & 0.0010 && 0.9512 & 0.0196 & 0.0030 \\
6 && 0.9516 & 0.00095 & 0.0002 && 0.9516 & 0.00285 & 0.0004 && 0.9516 & 0.0086 & 0.0013 \\
7 && 0.9514 & 0.00046 & 0.0001 && 0.9514 & 0.00138 & 0.0002 && 0.9514 & 0.0042 & 0.0006 \\
8 && 0.9515 & 0.00028 & 0.0000 && 0.9514 & 0.00083 & 0.0001 && 0.9515 & 0.0025 & 0.0004 \\
9 && 0.9515 & 0.00024 & 0.0000 && 0.9515 & 0.00071 & 0.0001 && 0.9515 & 0.0021 & 0.0003 \\
10&& 0.9516 & 0.00014 & 0.0000 && 0.9516 & 0.00042 & 0.0001 && 0.9516 & 0.0013 & 0.0002 \\
\hline
\end{tabular}
\end{small}
\endgroup
\smallskip

\end{center}
\end{table}
\medskip                                                                       

\begin{table}
\begin{center}
\caption{Average band-width $W$ ($1-\alpha=0.95$)}
\label{tab:width}
\bigskip
\begingroup
\renewcommand{\arraystretch}{1.2}
\begin{small}
\begin{tabular}{cccccccccc}
\hline
$m$ && 3 & 4 & 5 & 6 & 7 & 8 & 9 & 10 \\ 
\hline
$W$ && 1.463 & 1.752 & 2.017 & 2.275 & 2.546 & 2.764 & 2.990 & 3.211 \\
\hline
\end{tabular}
\end{small}
\endgroup
\end{center}
\end{table}

From this table, we first see that, for the true models ($m=5,8$ when model 1 is true), the coverage probabilities are more than, but approximately equal to, the nominal value 0.95, meaning that the proposed method is valid.
The most remarkable point is that, throughout the study, the coverage probabilities are kept at approximately 0.95, unless the assumed model is too small, and the bias $\delta$ is large.

Table \ref{tab:width} shows the average width of the confidence band defined by
\[
 W = \frac{\int_{\mathcal{X}} b_{\mathrm{tube},1-\alpha} \sqrt{f(x)^\top\Sigma f(x)} \,\dd x}{\int_{\mathcal{X}} \,\dd x} = b_{\mathrm{tube},0.95} \int_0^1 \sqrt{f_{2,m,0,1}(x)^\top\Sigma f_{2,m,0,1}(x)} \,\dd x.
\]
When the model is increasing in size, $W$ is increasing in size.
This suggests that a smaller model is preferable, unless it is too small to cause serious bias.

In summary, too small of a model should surely be avoided, whereas, a larger model has the disadvantage of having a wider confidence band.
This trade-off is crucially important in practice, and a promising future research topic, although it is out of scope for this paper.
For related topics, refer to Casella and Hwang \cite{Casella-Hwang12}, for shrinkage confidence bands, and Leeb et al.\ \cite{Leeb-etal15}, for confidence band post-model selection.

\section{Growth curve analysis} 
\label{sec:growth}

\newcommand{\MSM}{$^{\rm MSM}$}

As mentioned in Section \ref{sec:introduction}, the growth curve analysis is one of our research objectives to which we apply our method.
In this section, we demonstrate the analysis of mouse growth as an illustration.
Sun et al.\ \cite{Sun-Raz-Faraway99} proposed simultaneous confidence bands for a growth curve by virtue of the volume-of-tube method.
Differently from their analysis, we focus on the contrast of several growth curves.

Mice are one of the most popular model organisms, and are often used in genomic research.
Figure \ref{fig:growth} depicts the average body weights of male mice from four different strains measured from 2 to 20 weeks after birth.
The four strains are C57BL/6 (referred to as B6), MSM/Ms (MSM), B6-Chr17\MSM (B6-17), and B6-ChrXT\MSM (B6-XT).
Among these, B6 is the most common laboratory strain and serves as the standard.
MSM is a wild-derived strain having contrasting properties to B6 such as non-black color, small size, and aggressive behavior.
B6-17 and B6-XT are artificial strains known as consomic mice made from B6 and MSM.
B6-17 has all the chromosomes from B6, and only chromosome 17 from MSM; B6-XT has all the chromosomes from B6, and only half of the X chromosome from MSM.
By comparing the consomic strains with B6, we expect to reveal the role of each chromosome.

\begin{figure}[h]
\begin{center}
\begin{tabular}{ccc}
\includegraphics[width=0.8\linewidth]{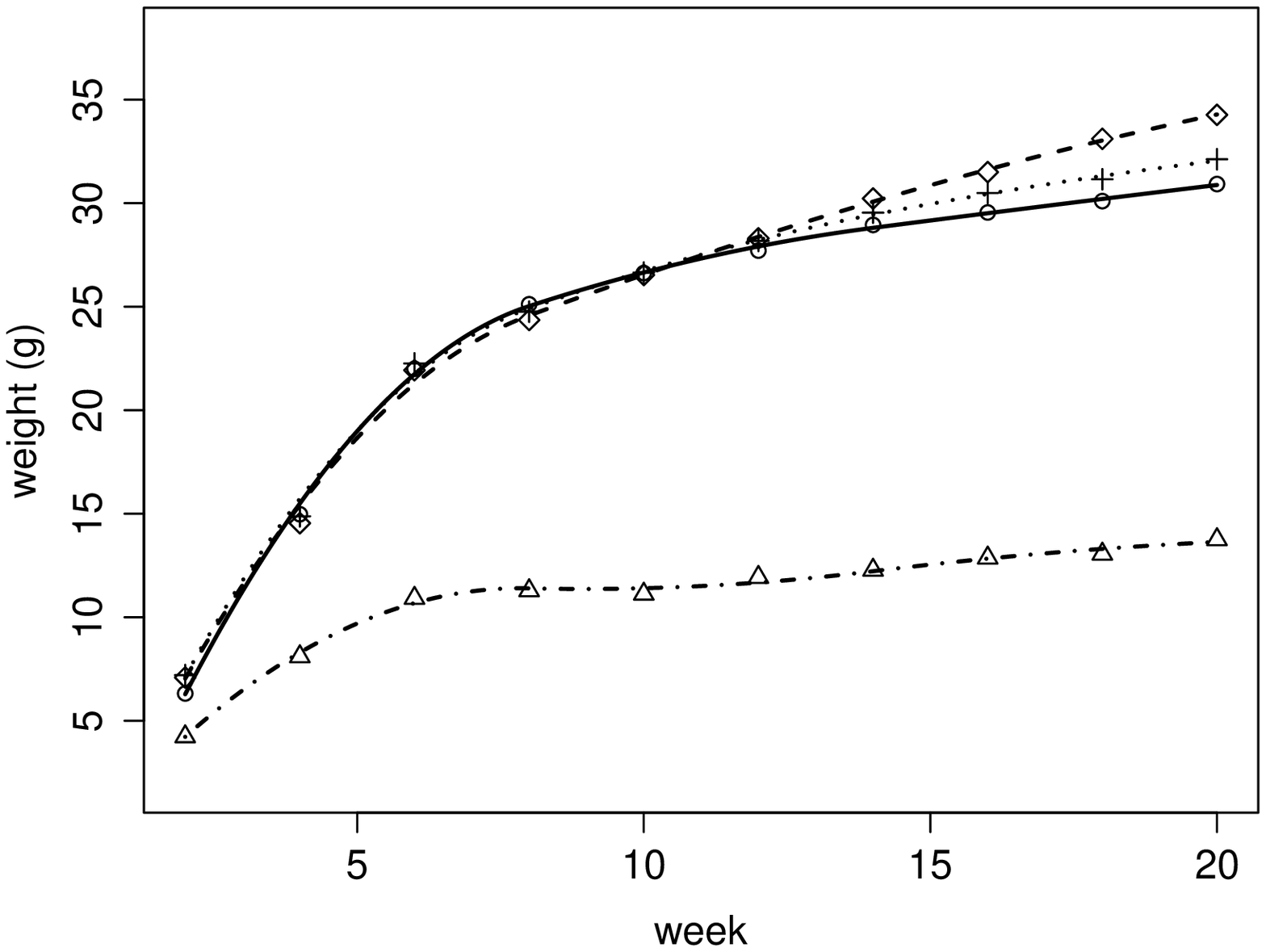}
\end{tabular}
\caption{Average body weights of mice from four strains.}

\medskip
{\footnotesize
(sample mean: $\circ$ (B6), $+$ (B6-17), $\diamond$ (B6-XT), {\tiny\mbox{$\triangle$}} (MSM); \hspace*{10mm} \\
fitted curve: {\bf ---} (B6), {\bf $\cdots$} (B6-17), {\bf --\,--} (B6-XT), {\bf --\,$\cdot$\,--} (MSM))
}
\label{fig:growth}
\end{center}
\end{figure}

The dataset we utilized is publicly available as Supplemental Table S1 of Takada et al.\ \cite{Takada-etal08}.
In their experiments, the weight (unit: gram) $y_{ijh}$ of the $h$th individual from strain $i$ was measured at time point $x_j$. 
The measurement time points were $\{x_1,\ldots,x_{10}\}=\{2,4,\ldots,20\}$ ($n=10$).
This dataset includes the average body weight $y_{ij}$ of strain $i$ at time $x_j$, and its standard error
\[
 y_{ij} = \frac{1}{r_i} \sum_{h=1}^{r_i} y_{ij h}, \quad
 \widehat{\mathrm{s.e.}}(y_{ij}) = \sqrt{\frac{1}{r_i^2}\sum_{h=1}^{r_i} (y_{ij h}-y_{ij})^2},
\]
as well as the number $r_i$ of individuals of strain $i$.

In the following analysis, we use $k=3$ groups (strains) B6 ($i=1$), B6-17 ($i=2$), and B6-XT ($i=3$).
The number of individuals are $r_1=12$, $r_2=24$, and $r_3=12$. 

We fit the model (\ref{model}) to these data.
We estimate the variance as
\[
 \widehat\sigma(x_j)^2
 = \frac{1}{\sum_{i=1}^k (r_i-1)}\sum_{i=1}^k \sum_{h=1}^{r_i}(y_{ijh}-y_{ij})^2
 = \frac{1}{\sum_{i=1}^k (r_i-1)}\sum_{i=1}^k r_i^2\widehat{\mathrm{s.e.}}(y_{ij})^2,
\]
which is used as the true value $\sigma(x_j)^2$ hereafter.
Figure \ref{fig:se} plots the estimated standard error $\widehat\sigma(x_j)$.
One particular feature of this dataset is that the experiment is well controlled and measurement errors are quite small.

\begin{figure}[h]
\begin{center}
\includegraphics[width=0.65\linewidth]{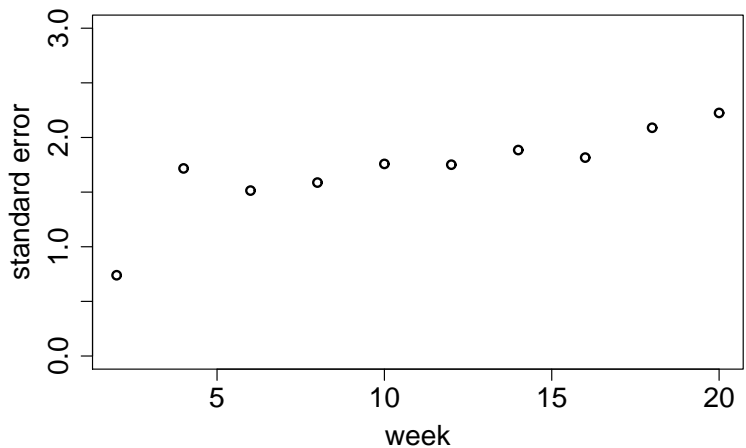}
\caption{Estimated standard error $\widehat\sigma(x_j)$.}
\label{fig:se}
\end{center}
\end{figure}

As the basis function $f(x)$,
we consider a family of basis functions
\[
 f(x) = f_{d,m,2,20}(x) =
 \left(B_d\biggl(\frac{x-2}{20-2}(m-d)-(i-d-1)\biggr)\right)_{1\le i\le m},
\]
with $B_d(x)$ given in (\ref{Bd}).
$f_{d,m,2,20}(x)$ consists of $m$ B-spline bases with equally-spaced knots at intervals of $(20-2)/(m-d)$.
Note that $f_{d,m,2,20}(x)$ is piecewise of class $C^{d}$.

In the range $d=2,3,4$ and $m=d+1,d+2,\ldots,n\,(=10)$, we searched for the best model that minimizes AIC and BIC defined below:
\[
 \mathrm{AIC}_{d,m} = L_{d,m} + 2 k m, \quad
 \mathrm{BIC}_{d,m} = L_{d,m} + \sum_{i=1}^k \ln(r_i n) m, \quad
 L_{d,m} = \sum_{i=1}^k r_i \sum_{j=1}^n \frac{(y_{ij}-\widehat y_{ij})^2}{\sigma(x_j)^2}
\]
with $k=3$, $n=10$, where $\widehat y_{ij}=\widehat\beta_i^\top f_{d,m,2,20}(x_j)$.
In both criteria, the minimizer was $(d,m)=(2,5)$, which we use as the true value hereafter.

Suppose that we are interested in the period $\mathcal{X}=[a,b]=[2,20]$.
An approximate value of the length of $\Gamma$ in (\ref{Gamma}) is given by
\[
 |\Gamma| \approx
 \sum_{t=1}^{N} \bigl\Vert\psi(x_{t})-\psi(x_{t-1})\bigr\Vert,
\]
where $x_t = a+t(b-a)/N$, $t=0,1,\ldots,N$.
When $N=10,000$, the approximate value of $|\Gamma|$ is
$6.989=2.225 \pi$. 
Using this, the critical value is
$b_{1-\alpha}=3.258$ ($\alpha=0.05$). 

To compare $k$ groups, various types of contrasts are used.
For a pairwise comparison between group $i$ and group $j$, we choose
$c = (\ldots,0,\underset{i\rm th}{1},0,\ldots,0,\underset{j\rm th}{-1},0,\ldots)$.
For the comparison of groups $\{i,j\}$ and group $k$, we use
\[
 c = \biggl(\ldots,0,\underset{i\rm th}{\frac{r_i}{r_i+r_j}},0,\ldots,0,\underset{j\rm th}{\frac{r_j}{r_i+r_j}},0,\ldots,0,\underset{k\rm th}{-1},0,\ldots\biggr).
\]
Figure \ref{fig:difference} depicts the difference curves of strains B6-17 vs.\ B6 (left) and B6-XT vs.\ B6 (right), and their 95\% simultaneous confidence bands.
In the left panel, the horizontal line representing zero difference is almost between the confidence bands.
This indicates that there is no significant difference between B6-17 and B6.
In contrast, in the right panel, after around week 14, the horizontal line is outside the confidence bands, thereby indicating that B6-XT and B6 are different during this period.

\begin{figure}[h]
\begin{center}
\begin{tabular}{cc}
\includegraphics[width=0.45\linewidth]{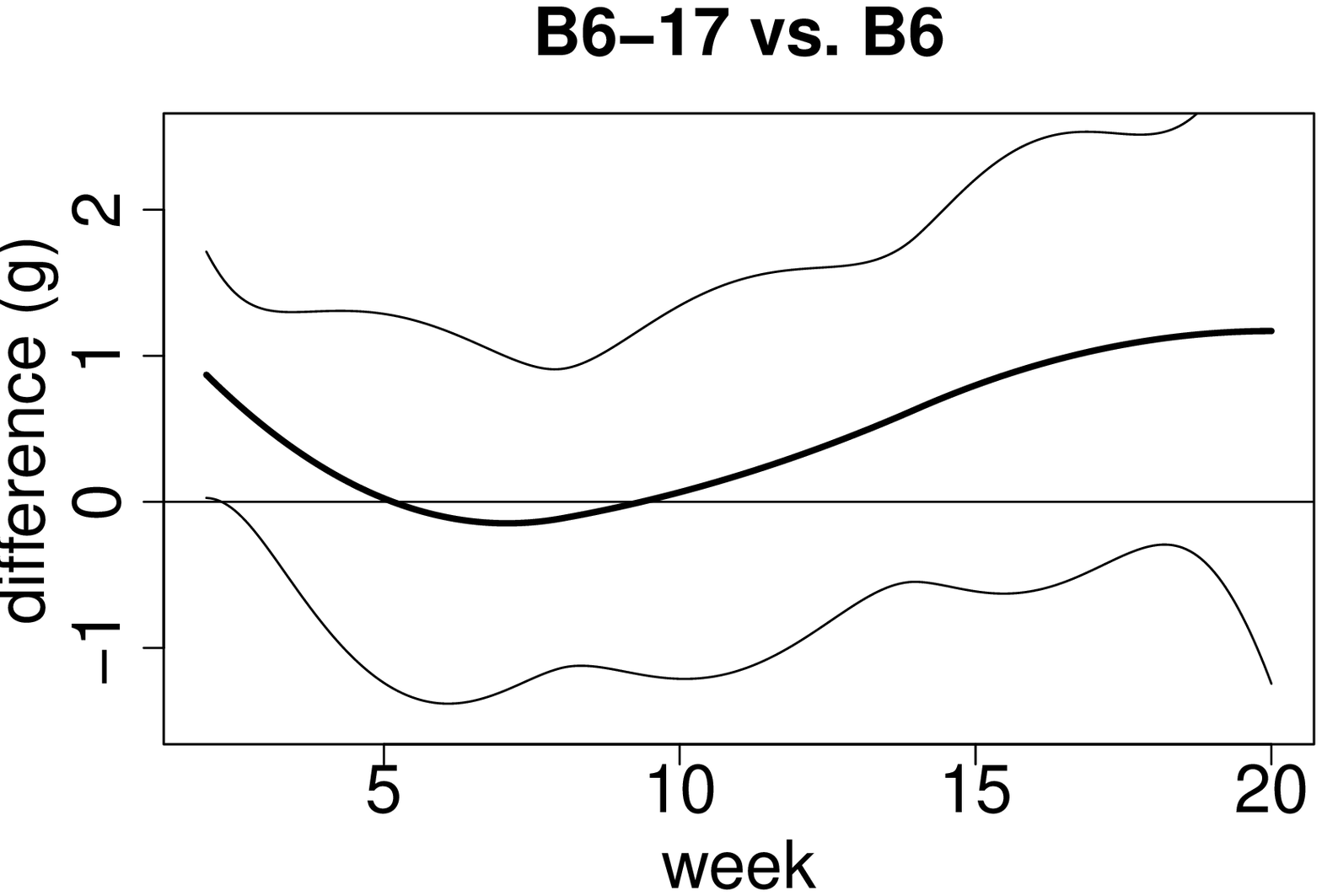} &
\includegraphics[width=0.45\linewidth]{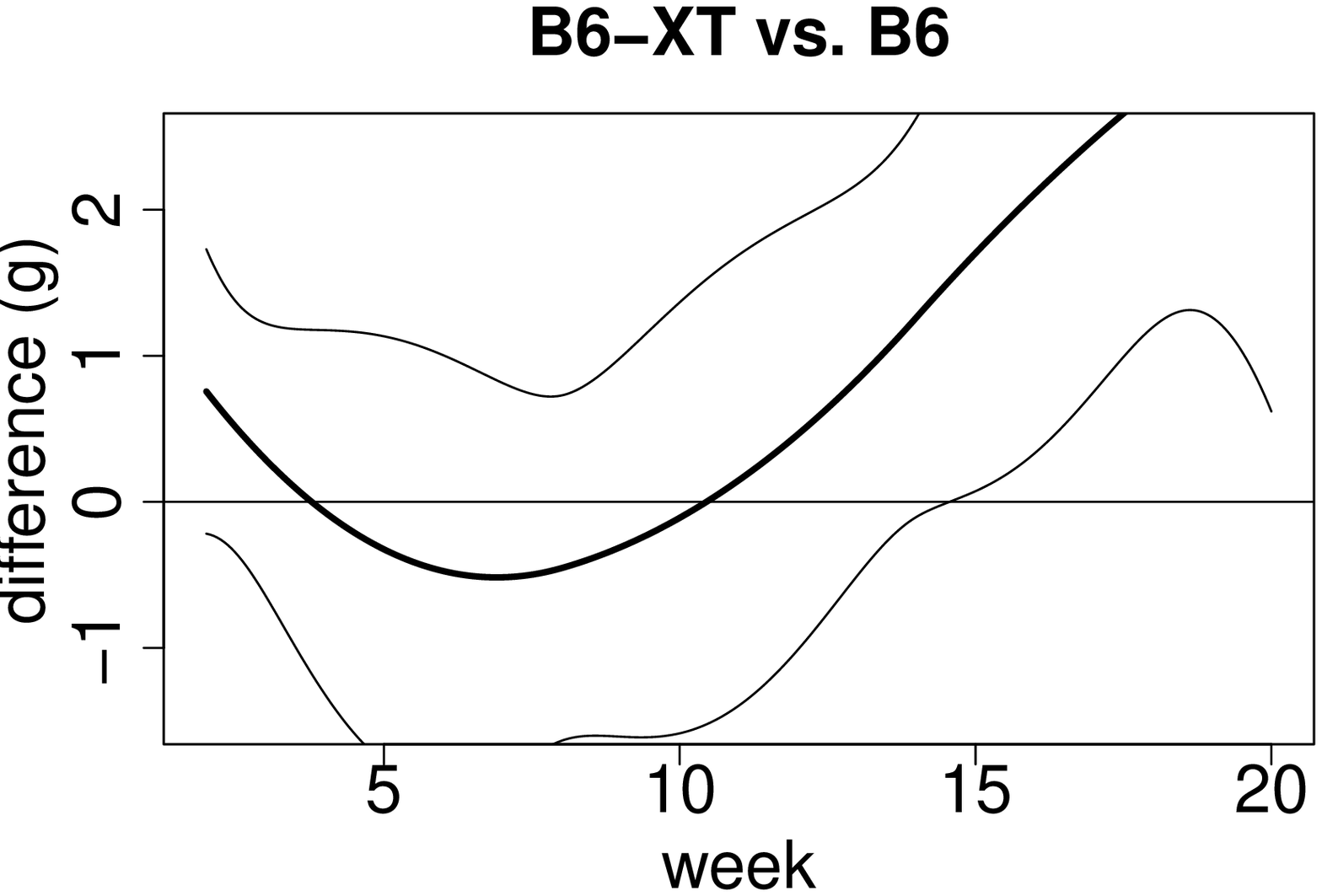}
\end{tabular}
\caption{Differences of body weights and 95\% confidence bands.}
\label{fig:difference}
\end{center}
\end{figure}

For a fixed $x$, the test statistic for the null hypothesis
$H_{0,x}:\beta_1^\top f(x) = \ldots = \beta_k^\top f(x)$ is
\[
 \chi^2(x) = \frac{1}{f(x)^\top\Sigma f(x)}\sum_{i=1}^k r_i \Biggl\{\widehat\beta_i^\top f(x)-\frac{\sum_{i=1}^k r_i\widehat\beta_i^\top f(x)}{\sum_{i=1}^k r_i}\Biggr\}^2.
\]
For a fixed $x$, the null distribution is the chi-square distribution with $k-1$ degrees of freedom.
However, for the overall null hypothesis
$H_0:\beta_1^\top f(x) = \ldots = \beta_k^\top f(x)$ for all $x\in\mathcal{X}$,
the distribution of the maximum of the chi-square random process should be used.
Figure \ref{fig:chi2_process} shows $\chi^2(x)$ and its upper 5\% critical value $b_{0.95}^2$.
As already shown in Figure \ref{fig:difference}, after around week 14, the hypothesis of equality is rejected.

\begin{figure}[h]
\begin{center}
\includegraphics[width=0.65\linewidth]{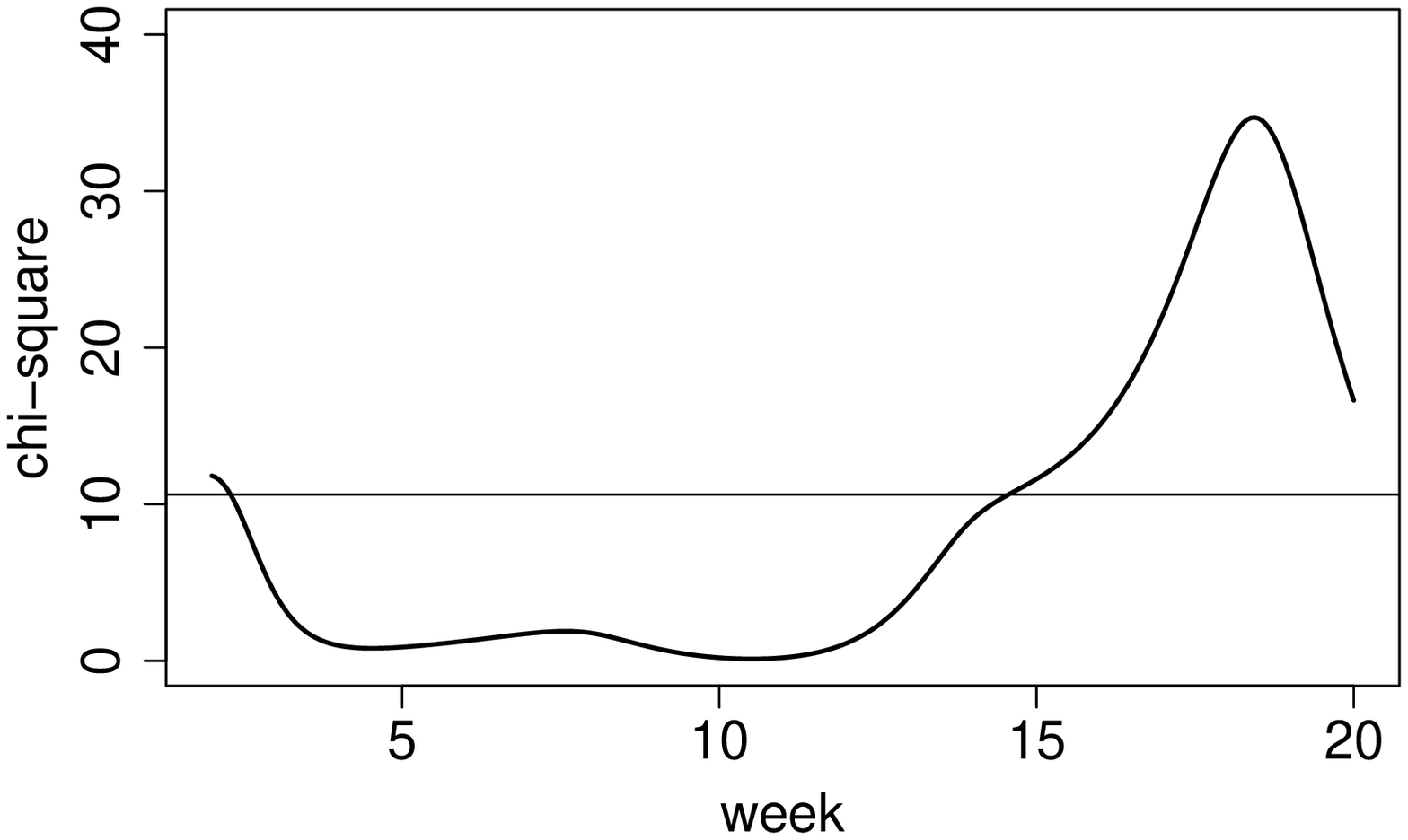}
\caption{Chi-square process $\chi^2(x)$ and its upper 5\% critical value.}
\label{fig:chi2_process}
\end{center}
\end{figure}

\appendix
\section{Appendix: Proofs}

\subsection{Proof of Theorem \ref{thm:main}}

\subsubsection*{Contribution of the inner points $\Int\,M$}

Here, we obtain the coefficients $w_{d+1-e}$ in (\ref{w}) when $M$ is given in (\ref{M}).

Let $h=h(\theta)$, $\theta=(\theta_i)_{1\le k-2}$, be a local coordinate system of $\S^{k-2}$.
For example,
\[
 h = h(\theta) = \left(\begin{array}{l}
 \cos\theta_1 \\
 \sin\theta_1 \cos\theta_2 \\
 \sin\theta_1 \sin\theta_2 \cos\theta_3 \\
 \quad \vdots \\
 \sin\theta_1 \cdots \sin\theta_{k-3} \cos\theta_{k-2} \\
 \sin\theta_1 \cdots \sin\theta_{k-3} \sin\theta_{k-2}
\end{array}\right)_{(k-1)\times 1},
\]
where
\[
 \theta\in\Theta =
\{(\theta_1,\ldots,\theta_{k-2}) \mid 0\le\theta_i\le \pi\ (i=1,\dots,k-3),\ 0\le\theta_{k-2}< 2\pi \}.
\]

Let $(x,\theta)\in\mathcal{X}\times\Theta$ be fixed, and
let $\phi(x,\theta) = h(\theta)\otimes\psi(x) \in M$.
We write $\psi=\psi(x)$, $h=h(\theta)$ and $\phi=\phi(x,\theta)$ for simplicity.
We first assume that $x\in\Int\mathcal{X}$, hence, $\phi(x,\theta)\in\Int M$.

By applying the Gram-Schmidt orthonormalization to the sequence
$\psi, \partial\psi/\partial x, \partial^2\psi/\partial x^2, \ldots$,
we construct the orthonormal basis (ONB)
$\psi_{(i)}$, $i=0,\ldots,p-1$, of $\R^p$.
The first three bases are
\[
 \psi_{(0)} = \psi,\ \ %
 \psi_{(1)} = \frac{1}{\sqrt{g}}\frac{\partial\psi}{\partial x}, \ \ %
 \psi_{(2)} = \frac{1}{\sqrt{\eta-\frac{\gamma^2}{g}-g^2}} \biggl(\frac{\partial^2\psi}{\partial x^2} + g \psi - \frac{\gamma}{g}\frac{\partial\psi}{\partial x}\biggr),
\]
where
\[
 g = g(x) = \biggl(\frac{\partial\psi}{\partial x}\biggr)^\top\biggl(\frac{\partial\psi}{\partial x}\biggr), \quad
 \gamma = \gamma(x) = \biggl(\frac{\partial^2\psi}{\partial x^2}\biggr)^\top\biggl(\frac{\partial\psi}{\partial x}\biggr), \quad
 \eta = \eta(x) = \biggl(\frac{\partial^2\psi}{\partial x^2}\biggr)^\top\biggl(\frac{\partial^2\psi}{\partial x^2}\biggr).
\]
Similarly, from the sequence $h$, $\partial h/\partial\theta_i$, $i=1,\ldots,k-2$, we obtain ONB $h_{(i)}$, $i=0,\ldots,k-2$, of $\R^{k-1}$.
We prepare a $(k-2)\times (k-2)$ upper triangle matrix $D$ such that
\[
 \biggl(h,\frac{\partial h}{\partial\theta_1},\ldots,\frac{\partial h}{\partial\theta_{k-2}}\biggr) = \bigl(h_{(0)},h_{(1)},\ldots,h_{(k-2)}\bigr)
 \begin{pmatrix} 1 & 0 \\ 0 & D \end{pmatrix}, \quad\mbox{or}\ \ %
D=\biggl(h_{(i)}^\top\frac{\partial h}{\partial\theta_j}\biggr)_{1\le i,j\le k-2}.
\]
Now we have the ONB $h_{(i)}\otimes \psi_{(j)}$, $i=0,\ldots,k-2$, $j=0,\ldots,p-1$, of the ambient space $\R^n$ with $n=p(k-1)$.
Note that $\phi=h_{(0)}\otimes\psi_{(0)}$.

The tangent space $T_\phi M$ is spanned by
\begin{equation*}
 \frac{\partial\phi}{\partial x} = h\otimes\frac{\partial\psi}{\partial x}, \quad
 \frac{\partial\phi}{\partial\theta_i} = \frac{\partial h}{\partial\theta_i} \otimes\psi, \ \ i=1,\ldots,k-2.
\end{equation*}
The metric matrix of $T_\phi M$ with respect to the parameter $x,\theta_1,\ldots,\theta_{k-2}$ is
\begin{equation}
\label{metric}
 \begin{pmatrix} g & 0 \\ 0 & G \end{pmatrix}, \quad\mbox{where}\ \ %
G=\biggl(\biggl(\frac{\partial h}{\partial\theta_i}\biggr)^\top\biggl(\frac{\partial h}{\partial\theta_j}\biggr)\biggr)_{1\le i,j\le k-2} = D^\top D.
\end{equation}
$T_\phi M$ has the ONB
$h_{(0)}\otimes\psi_{(1)}$, $h_{(i)}\otimes\psi_{(0)}$, $i=1,\ldots,k-2$.

The normal space perpendicular to $T_\phi(\co(M))=T_\phi M\oplus\spn\{\phi\}$ is
\begin{align}
 N_{\phi}(\co(M))
= \spn\bigl\{ & h_{(0)}\otimes\psi_{(j)},\,j=2,\ldots,p-1;\ \nonumber \\
& h_{(i)}\otimes\psi_{(j)},\,i=1,\ldots,k-2,\,j=1,\ldots,p-1 \bigr\}.
\label{onb-normal}
\end{align}
 
The second order derivatives of $\phi=\phi(x,\theta)$ are
\[
 \frac{\partial^2\phi}{\partial x^2} = h\otimes\frac{\partial^2\psi}{\partial x^2}, \quad
\frac{\partial^2\phi}{\partial x\partial\theta_i} = \frac{\partial h}{\partial\theta_i}\otimes\frac{\partial\psi}{\partial x}, \quad
\frac{\partial^2\phi}{\partial\theta_i\partial\theta_j} = \frac{\partial^2 h}{\partial\theta_i\partial\theta_j}\otimes\psi.
\]
Taking the inner product of the second derivatives and the ONB of $N_{\phi}(\co(M))$ listed in (\ref{onb-normal}),
we see that the nonzero elements of the second fundamental form are
\[
 -\biggl(h\otimes\frac{\partial^2\psi}{\partial x^2}\biggr)^\top (h_{(0)}\otimes \psi_{(2)}) = -\biggl(\frac{\partial^2\psi}{\partial x^2}\biggr)^\top \psi_{(2)} = -\zeta,
\]
where
\[
 \zeta = \zeta(x) = \sqrt{\eta-\frac{\gamma^2}{g}-g^2}
\]
and
\[
 -\biggl(\frac{\partial h}{\partial\theta_i}\otimes\frac{\partial\psi}{\partial x}\biggr)^\top(h_{(j)}\otimes\psi_{(1)}) = -\biggl(\frac{\partial h}{\partial\theta_i}\biggr)^\top h_{(j)}\sqrt{g}=-D_{ji}\sqrt{g}.
\]

We renumber the ONB of $N_{\phi}(\co(M))$ as
\[
 N_1 = h_{(0)}\otimes\psi_{(2)},\ \ %
 N_i = h_{(i-1)}\otimes\psi_{(1)},\,i=2,\ldots,k-1,
\]
and $N_k,\ldots,N_{pk-p-k}$ are the other vectors.
Write $N(t) = \sum_{i=1}^{pk-p-k} N_i t_i$,
where $t=(t_1,\ldots,t_{pk-p-k})$.
Then,
\begin{align*}
 -\biggl(\frac{\partial^2\phi}{\partial x^2}\biggr)^\top N(t) =& -\zeta t_1, \\
 -\biggl(\frac{\partial^2\phi}{\partial x\partial\theta_i}\biggr)^\top N(t) =& -\sum_{j=1}^{k-2}D_{ji}t_{j+1}\sqrt{g}, \\
 -\bigg(\frac{\partial^2\phi}{\partial\theta_i\partial\theta_j}\biggr)^\top N(t) =& 0.
\end{align*}
Therefore, the second fundamental form (unnormalized version) in the direction $N(t)$ is
\begin{equation}
\label{2nd}
\left(\begin{array}{ccc}
 -\zeta t_1 & -(t_2,\ldots,t_{k-1}) D \sqrt{g} \\
 -D^\top \begin{pmatrix}t_{2}\\ \vdots \\ t_{k-1}\end{pmatrix} \sqrt{g} & 0
\end{array}\right).
\end{equation}
Multiplication of the inverse of the metric (\ref{metric}) enables us to obtain the normalized version of the second fundamental form.
Noting that
\[
 \begin{pmatrix} g & 0 \\ 0 & G \end{pmatrix}^{-1} =
 \begin{pmatrix} g & 0 \\ 0 & D^\top D \end{pmatrix}^{-1} =
 \begin{pmatrix} 1/\sqrt{g} & 0 \\ 0 & D^{-1} \end{pmatrix}
 \begin{pmatrix} 1/\sqrt{g} & 0 \\ 0 & (D^\top)^{-1} \end{pmatrix},
\]
we multiply
\[
 \begin{pmatrix} 1/\sqrt{g} & 0 \\ 0 & (D^\top)^{-1} \end{pmatrix}
 \quad\mbox{and}\quad
 \begin{pmatrix} 1/\sqrt{g} & 0 \\ 0 & D^{-1} \end{pmatrix}
\]
from the left and right to (\ref{2nd}), respectively, to obtain
\begin{align*}
\begin{pmatrix} 1/\sqrt{g} & 0 \\ 0 & (D^\top)^{-1} \end{pmatrix}
&
 \begin{pmatrix}
 -\zeta t_1 & -(t_2,\ldots,t_{k-1}) D \sqrt{g} \\
 -D^\top \begin{pmatrix} t_2 \\ \vdots \\ t_{k-1} \end{pmatrix} \sqrt{g} & 0
 \end{pmatrix}
\begin{pmatrix} 1/\sqrt{g} & 0 \\ 0 & D^{-1} \end{pmatrix} \\
& =\begin{pmatrix}
 -(\zeta/g) t_1 & -(t_2,\ldots,t_{k-1}) \\
 -\begin{pmatrix} t_2 \\ \vdots \\ t_{k-1} \end{pmatrix} & 0
\end{pmatrix}
= H(x,\theta;N(t)).
\end{align*}
This is the second fundamental form with respect to the orthonormal coordinates.
Now we have
\begin{equation}
\label{tr_e}
 \tr_e H(x,\theta;N(t)) =
\begin{cases}
  1                      & (e=0), \\
 -(\zeta/g) t_1          & (e=1), \\
 -\sum_{j=2}^{k-1} t_j^2 & (e=2), \\
 0                       & (\mbox{otherwise}).
\end{cases}
\end{equation}

Next, we evaluate the integral
\begin{align}
\label{integral}
\int_{v\in N_{\phi}(\co(M))\cap\S^{n-1}}
 \tr_e H(x,\theta;v) \,\dd v,
\end{align}
where $n=p(k-1)$, $\dd v$ is the volume element of $N_{\phi}(\co(M))\cap\S^{n-1}$, by following Section 4.2.2 of Kuriki and Takemura \cite{Kuriki-Takemura01}.
Recall that $d=\dim M=k-1$.

Because $N_{\phi}(\co(M))$ is a linear space of dimension $n-d-1 = p(k-1)-(k-1)-1 = pk-p-k$,
$N_{\phi}(\co(M))\cap\S^{n-1}$ is nothing but a ($pk-p-k-1$)-dimensional unit sphere.
Hence,
\[
 \int_{v\in N_{\phi}(\co(M))\cap\S^{n-1}} \,\dd v = \Vol(\S^{pk-p-k-1}) = \Omega_{pk-p-k}.
\]
Therefore, if $V$ is distributed as the uniform distribution on $N_{\phi}M\cap\S^{n-1}$, denoted by $\Unif(N_{\phi}M\cap\S^{n-1})$, 
 then $\mbox{(\ref{integral})} = \Omega_{pk-p-k}\times \E[\tr_e H(x,\theta;V)]$.

Suppose that
$T = (T_1,\ldots,T_{pk-p-k}) \sim \N_{pk-p-k}(0,I)$,
and let
$N(T) = \sum_{i=1}^{pk-p-k} N_i T_i$.
Then,
\[
 \Vert N(T)\Vert^2 = \sum_{i=1}^{pk-p-k} T_i^2 \sim \chi_{pk-p-k}^2
\quad\mbox{and}\quad
 V = \frac{N(T)}{\Vert N(T)\Vert} \sim \Unif(N_{\phi} M\cap\S^{n-1})
\]
are independently distributed.
Hence,
\[
\E[\tr_e H(x,\theta; N(T))]
= \E[\Vert N(T)\Vert^e \tr_e H(x,\theta,V)]
= \E[\Vert N(T)\Vert^e\} \E\{\tr_e H(x,\theta,V)],
\]
and
\[
 \E[\tr_e H(x,\theta,V)] = \frac{\E[\tr_e H(x,\theta; N(T))]}{\E\bigl[(\chi_{pk-p-k}^2)^{e/2}\bigr]}.
\]
From (\ref{tr_e}),
\[
 \E[\tr_e H(x,\theta; N(T))] =
 \begin{cases}
 1 & (e=0), \\
 0 & (e=1), \\
 -\sum_{i=2}^{k-1} \E[T_i^2] = -(k-2) & (e=2), \\
 0 & (\mbox{otherwise}),
 \end{cases}
\]
hence,
\[
 \E[\tr_e H(x,\theta;V)] =
 \begin{cases}
 1 & (e=0), \\
\displaystyle
 -\frac{k-2}{pk-p-k} & (e=2), \\
 0 & (\mbox{otherwise}).
 \end{cases}
\]
Therefore,
\[
 \int_{v\in N_{\phi}(\co(M))\cap\S^{n-1}}
 \tr_e H(x,\theta,v) \,\dd v = \begin{cases}
 \Omega_{pk-p-k} & (e=0), \\
\displaystyle
 -\frac{k-2}{pk-p-k} \Omega_{pk-p-k} & (e=2), \\
 0 & (\mbox{otherwise}). \end{cases}
\]

Note that the results are independent of $x$ and $\theta$.
This implies that the integral in (\ref{w}) with respect to the volume element
\[
 \dd u = \sqrt{g}\,\dd x\times |G(\theta)|^{1/2} \,\dd\theta_{1}\cdots \dd\theta_{k-2}
\] 
is simply multiplying the constant
\[
 \Vol(M) = |\Gamma|\times\Vol(\S^{k-2}) = |\Gamma|\Omega_{k-1}.
\]
Finally, from (\ref{w}) of Proposition \ref{prop:tube},
\begin{align*}
 w_{d+1} &= w_k =
 \frac{1}{\Omega_k\Omega_{pk-p-k}}\times |\Gamma| \Omega_{k-1}\Omega_{pk-p-k},
\\
 w_{d-1} &= w_{k-2} =
 \frac{1}{\Omega_{k-2}\Omega_{pk-p-k+2}}\times |\Gamma| \Omega_{k-1}\times\biggl(-\frac{k-2}{pk-p-k}\biggr)\Omega_{pk-p-k},
\end{align*}
and the other $w$'s are zero.
Simple calculations give
\begin{equation}
\label{w2} 
 w_k = -w_{k-2}
 = \frac{\Gamma (\frac{k}{2})}{\sqrt{\pi}\Gamma(\frac{k-1}{2})}|\Gamma|.
\end{equation}

\subsubsection*{Contribution of the boundary $\partial M$}

Here, we obtain the coefficients $w'_{d-e}$ in (\ref{w'}) when $M$ is given in (\ref{M}).

Suppose that $\mathcal{X}=[a,b]$.
Then,
\[
 \partial M = \{\phi(a,\theta) \mid \theta \in \Theta\} \sqcup \{\phi(b,\theta) \mid \theta \in \Theta\}.
\]
Let $x=a$ and $\theta\in\Theta$ be fixed.
Then, $\phi(a,\theta)\in\partial M$.
The metric of the boundary $\partial M$ at $\phi(a,\theta)$ is
\[
\biggl(\frac{\partial\phi}{\partial\theta_i}\biggr)^\top \biggl(\frac{\partial\phi}{\partial\theta_j}\biggr)\bigg|_{(a,\theta)} = (G(\theta))_{ij}.
\]

Note that
$\partial (\co(M)) = \co(\partial M)$. 
The support cone of $\co(M)$ at $\phi(a,\theta)\in\partial (\co(M))$ is
\[
 S_{\phi(a,\theta)}(\co(M)) = L \oplus K_1,
\]
where
\[
L = \spn\biggl\{h\otimes\psi,\ \frac{\partial h}{\partial\theta_i}\otimes\psi,\,i=1,\ldots,k-2\biggr\}, \quad
K_1 = \biggl\{\lambda\biggl({h\otimes\frac{\partial\psi}{\partial x}}\biggr) \mid \lambda \ge 0\biggr\}.
\]
This is a direct sum (the Minkowski sum) of a linear subspace and a cone.
To obtain its dual cone, the following lemma is useful.
\begin{lemma}
Let $K_{1}$ be a cone, and $L$ be a linear subspace.
Let $K = K_1 \oplus L$.
Then, the dual cone of $K$ is
$K^* = K_1^*\cap L^\perp$.
\end{lemma}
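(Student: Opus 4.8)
The plan is to establish the claimed identity by proving the two inclusions $K^* \subseteq K_1^* \cap L^\perp$ and $K_1^* \cap L^\perp \subseteq K^*$ separately, working directly from the definition of the dual cone $K^* = \{ y \mid x^\top y \le 0 \text{ for all } x \in K \}$. The one structural fact I would record at the outset is that, because $K_1$ is a cone and $L$ is a linear subspace, both contain the origin; hence $K_1 = K_1 \oplus \{0\} \subseteq K$ and $L = \{0\} \oplus L \subseteq K$. These two containments are what let me probe a functional $y \in K^*$ against the pieces $K_1$ and $L$ individually.

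For the inclusion $K^* \subseteq K_1^* \cap L^\perp$, I would take $y \in K^*$ and test it against the two sub-families. Restricting the defining inequality $x^\top y \le 0$ to $x \in K_1 \subseteq K$ gives $a^\top y \le 0$ for every $a \in K_1$, i.e.\ $y \in K_1^*$. Restricting instead to $x \in L \subseteq K$ gives $b^\top y \le 0$ for every $b \in L$; here I would exploit that $L$ is a linear subspace, so $-b \in L$ as well, whence $-b^\top y \le 0$ too, and the two inequalities force $b^\top y = 0$ for all $b \in L$, that is $y \in L^\perp$. Combining, $y \in K_1^* \cap L^\perp$.

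For the reverse inclusion, I would take $y \in K_1^* \cap L^\perp$ and an arbitrary $x \in K$, write it (by definition of the Minkowski sum) as $x = a + b$ with $a \in K_1$ and $b \in L$, and split $x^\top y = a^\top y + b^\top y$. The first term is $\le 0$ since $y \in K_1^*$, and the second is $0$ since $y \in L^\perp$, so $x^\top y \le 0$ and therefore $y \in K^*$.

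I expect no genuine obstacle here: the statement is elementary convex-geometry bookkeeping. The only step that requires a moment's care is the linear-subspace argument in the forward direction, where one must upgrade the one-sided inequality $b^\top y \le 0$ to the equality $b^\top y = 0$ using closure of $L$ under negation — this is precisely what distinguishes the $L^\perp$ in the conclusion from a mere $L^*$. The argument is also insensitive to the sign convention chosen for the dual cone, since replacing $\le$ by $\ge$ throughout leaves every step valid.
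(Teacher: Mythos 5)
Your proof is correct. The paper states this lemma without proof (it is introduced only as ``useful'' and then applied directly), and your two-inclusion argument --- testing $y\in K^*$ against $K_1\subseteq K$ and $L\subseteq K$ separately, upgrading $b^\top y\le 0$ to equality via closure of $L$ under negation, and splitting $x=a+b$ for the converse --- is exactly the standard verification one would supply.
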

Because
\[
 K_1^* =
 \bigl\{ \lambda(h_{(0)}\otimes\psi_{(1)}) \mid \lambda\le 0 \bigr\} \oplus \spn\bigl\{ h_{(0)}\otimes\psi_{(1)} \bigr\}^\perp
\]
and
\[
 L^\perp = \spn\bigl\{
 h_{(i)}\otimes\psi_{(j)},\,i=0,\ldots,k-2,\,j=1,\ldots,p-1 \bigr\},
\]
we have
\begin{align*}
N_{\phi(a,\theta)}(\co(M))
= K^* = K_1^*\cap L^\perp
= \bigl\{ &\lambda(h_{(0)}\otimes \psi_{(1)}) \mid \lambda\le 0 \bigr\} \oplus  \spn\bigl\{ \\
& h_{(0)}\otimes \psi_{(j)},\,j=2,\ldots,p-1;\, \\
& h_{(i)}\otimes\psi_{(j)},\,i=1,\ldots,k-2,\,j=1,\ldots,p-1 \bigr\},
\end{align*}
with $\dim N_{\phi(a,\theta)}(\co(M)) = pk-p-k+1$.

The second fundamental form of $\co(\partial M)$ at $\phi(a,\theta)$ is
\begin{align}
\label{2nd-boundary}
 -\biggl(\frac{\partial^{2}\phi(a,\theta)}{\partial\theta_i\partial\theta_j}\biggr)^\top v =
 -\biggl(\frac{\partial^2 h}{\partial\theta_i\partial\theta_j} \otimes \psi\biggr)^\top v,\ \ v\in N_{\phi(a,\theta)}(\co(M)).
\end{align}
We can easily see that the second fundamental form (\ref{2nd-boundary}) is always zero.
Therefore, the contribution of the boundary to $w'_{d-e}$ in (\ref{w'}) is only to case of $e=0$.
That is, all $w'_i$ except $w'_{k-1}$ are zero.
The contribution of the boundary $\{\phi(a,\theta)\mid\theta\in\Theta\}$ to $w'_{k-1}$ is
\begin{align*}
 \frac{1}{\Omega_{k-1-0}\Omega_{p(k-1)-(k-1)+0}}
 \int_{\partial M}|G|^{1/2} \,\dd\theta
 \int_{N_{\phi(a,\theta)}(\co(M))\cap\S^{n-1}} \,\dd v
=& \frac{\Vol(\S^{k-2})\Vol(\mbox{half of }\S^{pk-p-k})}{\Omega_{k-1}\Omega_{pk-p-k+1}} \\
=& \frac{1}{2}.
\end{align*}
The contribution of the other boundary $\{ \phi(b,\theta) \mid \theta\in\Theta \}$ to $w_{k-1}$ has the same value of 1/2.
Moreover, if the number of connected components of $\Gamma$ exceeds one,
we need to select all boundaries.
Since the number of boundaries is $2\chi(\Gamma)$, we have
\begin{equation}
\label{w'2}
 w'_{k-1} = \frac{1}{2}\times 2\chi(\Gamma) = \chi(\Gamma).
\end{equation}

Substituting (\ref{w2}) and (\ref{w'2}) into (\ref{p-hat}) yields (\ref{p-hat2}).

\subsection{Proof of Theorem \ref{thm:critical_radius}}

We apply the formula (\ref{critical}) for $\theta_{\mathrm{c}}$ to the case where $M$ is given in (\ref{M}).

Let
\begin{align*}
u = \phi(\tilde x,\tilde\theta)
  = h(\tilde\theta) \otimes \psi(\tilde x), \quad
v = \phi(x,\theta) = h(\theta) \otimes \psi(x),
\end{align*}
and write
$h=h(\theta)$, $\tilde h=h(\tilde\theta)$,
$\psi=\psi(x)$, $\tilde\psi=\psi(\tilde x)$.
We discuss the two cases (i) $\psi\in\Int\Gamma$ and (ii) $\psi\in\partial\Gamma$ separately.

Case (i).
Suppose that $\psi(x)\in\Int\Gamma$.
Write $\phi_{\theta_i}=\{\partial h(\theta)/\partial\theta_i\}\otimes\psi(x)$,
$\phi_x=h(\theta)\otimes\psi_x(x)$,
$\psi_x=\partial\psi(x)/\partial x$.

The orthogonal projection matrix onto the space $T_{\phi}(\co(M))=\spn\{ \phi,\phi_{\theta_i},\phi_x \}$ is
\begin{align*}
P_v
& = \begin{pmatrix} \phi & \phi_{\theta_i} & \phi_x \end{pmatrix}_{p(k-1)\times k}
\left( \begin{pmatrix} \phi^\top \\ \phi_{\theta_i}^\top \\ \phi_x^\top \end{pmatrix}
 \begin{pmatrix} \phi & \phi_{\theta_i} & \phi_x \end{pmatrix} \right)^{-1}_{k\times k}
\begin{pmatrix} \phi^\top \\ \phi_{\theta_i}^\top \\ \phi_x^\top \end{pmatrix}_{k\times p(k-1)} \\
& = \begin{pmatrix} \phi & \phi_{\theta_i} & \phi_x \end{pmatrix}
 \begin{pmatrix} 1 & 0 & 0 \\ 0 & G(\theta) & 0 \\ 0 & 0 & g(x) \end{pmatrix}^{-1}
 \begin{pmatrix} \phi^\top \\ \phi_{\theta_i}^\top \\ \phi_x^\top \end{pmatrix} \\
& = \phi\phi^\top + (\phi_{\theta_i})G(\theta)^{-1}(\phi_{\theta_i})^\top
 + \frac{1}{g(x)}\phi_x\phi_x^\top \\
& =  h h^\top \otimes\psi\psi^\top + (I_{k-1} - h h^\top) \otimes \psi\psi^\top
 + \frac{1}{g} h h^\top \otimes \psi_x\psi_x^\top \\
& = I_{k-1} \otimes \psi\psi^\top + \frac{1}{g} h h^\top \otimes \psi_x\psi_x^\top.
\end{align*}
As $P_v^\perp(w)=I(w)-P_v(w)$,
\begin{equation}
\label{infarg}
\frac{(1-u^\top v)^2}{\Vert(I_n-P_v) u\Vert^2}
= \frac{\{1-(\tilde h\otimes\tilde\psi)^\top (h\otimes\psi)\}^2}
 {\Vert(I_{k-1}\otimes I_p -I_{k-1}\otimes\psi\psi^\top -\frac{1}{g} h h^\top\otimes\psi_x\psi_x^\top)(\tilde h\otimes\tilde\psi)\Vert^2}.
\end{equation}
Let
$s = \psi^\top(x)\psi(\tilde x) = \psi^\top\tilde\psi$, 
$r = \psi_x^\top(x)\psi(\tilde x) = \psi_x^\top\tilde\psi$,
and
$\alpha = h(\theta)^\top h(\tilde\theta) = h^\top \tilde h$.
In (\ref{infarg}), the numerator is $(1-\alpha s)^2$, and the denominator is 
\begin{align*}
\left\Vert \tilde h\otimes\tilde\psi-s \tilde h\otimes\psi-\frac{\alpha r}{g} h\otimes\psi_x\right\Vert^2
=& 1 + s^2 + \frac{\alpha^2 r^2}{g} - 2 s^2 - 2\frac{\alpha r}{g}\alpha r \\
=& 1 - s^2 - \frac{\alpha^2 r^2}{g} = 1 - s^2 - \alpha^2 t^2,
\end{align*}
where
$t = r/\sqrt{g} = \psi_x(x)^\top\psi(\tilde x)/\Vert \psi_x(x)\Vert$.
Hence,
\[
 \mbox{(\ref{infarg})} = \frac{(1-\alpha s)^{2}}{1-s^2-\alpha^2 t^2}.
\]

The infimum is taken over ``$x\ne \tilde x$ or $\theta\ne\tilde\theta$'', or equivalently, ``$x\ne \tilde x$ or $\alpha\ne 1$''.
However, when $x=\tilde x$ and $\alpha\ne 1$,
the argument of the infimum is $(1-\alpha)^2/0=\infty$.
Therefore, we can exclude case $x=\tilde x$ from the infimum argument.

Case (ii).
Suppose that $\psi(x)\in\partial\Gamma$.
Fix a point on the boundary
\[
 v = \phi(x,\theta) = h(\theta)\otimes\psi(x) \in \partial M.
\]
The support cone of $\co(M)$ at $v$ is
\[
 S_v(\co(M)) =
 \spn\{ \phi,\phi_{\theta_i} \}\oplus \{\lambda\varepsilon\phi_x \mid \lambda\ge 0 \}
\]
where $\varepsilon=\varepsilon(x)=1$ if $\psi_x$ is inward to $\Gamma$, $\varepsilon=-1$ if $\psi_x$ is outward to $\Gamma$.

The orthogonal projection operator onto the cone $S_v(\co(M))$ is
$w\mapsto P_v(w)$, where
\begin{align*}
P_v(w)
=& \phi\phi^\top w + \phi_{\theta}G^{-1}(\theta)\phi_{\theta}^{-1}w
 + \frac{\phi_x}{\Vert\phi_x\Vert^2}\max\{ 0,\varepsilon\phi_x^\top w \} \\
=& (I_{k-1} \otimes \psi\psi^\top)w
 + \frac{\phi_x}{\Vert\phi_x\Vert^2}\max\{0,\varepsilon\phi_x^\top w\}.
\end{align*}
Hence,
\begin{align*}
P_v^\perp(w)
= w-P_v(w)
&= w-(I_k \otimes \psi\psi^\top)w-\frac{h \otimes \psi_x}{g}\max\{0,\varepsilon(h \otimes \psi_x)^\top w\}.
\end{align*}
Substituting
$u = \phi(\tilde x,\tilde\theta) = \tilde h\otimes\tilde\psi$,
\[
 P_v^\perp(u-v) = \tilde h\otimes\tilde\psi - s \tilde h\otimes\psi
 - \frac{\max\{0,\varepsilon\alpha r\}}{g} h \otimes \psi_x,
\]
\begin{align*}
 \Vert P_v^\perp(u-v)\Vert^2
=& 1 + s^2 + \frac{\max\{0,\varepsilon\alpha r\}^2}{g} - 2 s^2 - 2\alpha r \frac{\max\{0,\varepsilon\alpha r\}}{g} \\
=& 1 - s^2 - \frac{\max\{0,\varepsilon\alpha r\}^2}{g} = 1 - s^2 - \max\{0,\varepsilon\alpha t\}^2,
\end{align*}
and we have
\begin{align*}
\frac{(1-u^\top v)^2}{\Vert P_v^\perp(u-v)\Vert^2}
= \frac{(1-\alpha s)^2}{1-s^2-\max\{0,\varepsilon \alpha t\}^2}.
\end{align*}
For the same reason as in case (i),
the infimum is taken over the set $x\ne \tilde x$ and $\alpha\in[-1,1]$.

\subsection{Proof of Theorem \ref{thm:local_critical_radius}}

We use the same notations as in the proof of Theorems \ref{thm:main} and \ref{thm:critical_radius}.
The local critical radius $\theta_{\mathrm{c,loc}}$ defined by (\ref{critical-local}) is rewritten as
\begin{align*}
\tan^2\theta_{\mathrm{c,loc}}
= \liminf_{|x-\tilde x|\to 0,\,\alpha\to 1}\frac{(1-\alpha s)^2}{1-s^2-\alpha^2 t^2}.
\end{align*}
Let $\tilde x=x+\Delta$ and $\alpha=1-\delta$, and consider $\Delta\to 0$ and $\delta\to 0$.
Write
$\psi_x=\partial\psi/\partial x$, $\psi_{xx}=\partial^2\psi/\partial x^2$,
etc., and
$g=\psi_x^\top\psi_x$, $\gamma=\psi_{xx}^\top\psi_x$, and $\eta=\psi_{xx}^\top\psi_{xx}$
as before.
Noting that
\begin{align*}
& 0=\dd (\psi^\top\psi)/\dd x=2\psi_x^\top\psi, \\
& 0=\dd^2(\psi^\top\psi)/\dd x^2=2\psi_{xx}^\top\psi+2\psi_x^\top\psi_x, \\
& 0=\dd^3(\psi^\top\psi)/\dd x^3=2\psi_{xxx}^\top\psi+6\psi_{xx}^\top\psi_x, \\
& 0=\dd^4(\psi^\top\psi)/\dd x^4=2\psi_{xxxx}^\top\psi+8\psi_{xxx}^\top\psi_x+6\psi_{xx}^\top\psi_{xx},
\end{align*}
we have
\[
 \psi_{xx}^\top\psi=-g, \quad \psi_{xxx}^\top\psi=-3\gamma, \quad \psi_{xxxx}^\top\psi+4\psi_{xxx}^\top\psi_x=-3\eta.
\]
Substituting these, we have
\begin{align*}
s = \psi(x)^\top\psi(\tilde x)
=&  \psi^\top \biggl( \psi + \psi_x\Delta + \frac{1}{2}\psi_{xx}\Delta^2 + \frac{1}{6}\psi_{xxx}\Delta^3 + \frac{1}{24}\psi_{xxxx}\Delta^4 \biggr) +o(\Delta^4) \\
=& 1 - \frac{1}{2}g\Delta^2 - \frac{1}{2}\gamma\Delta^3 + \frac{1}{24}\psi_{xxxx}^\top\psi\Delta^4 +o(\Delta^4),
\\
r = \psi_x(x)^\top\psi(\tilde x)
=& \psi_x^\top \biggl( \psi + \psi_x\Delta + \frac{1}{2}\psi_{xx}\Delta^2 + \frac{1}{6}\psi_{xxx}\Delta^3 \biggr) +o(\Delta^3) \\
=& g\Delta + \frac{1}{2}\gamma\Delta^2 + \frac{1}{6}\psi_{xxx}^\top\psi_x\Delta^3 +o(\Delta^3),
\\
t^2 = \frac{r^2}{g}
=& g\Delta^2 + \gamma\Delta^3 + \frac{1}{4g}\gamma^2\Delta^4 + \frac{1}{3}\psi_{xxx}^\top\psi_x\Delta^4 +o(\Delta^4),
\end{align*}
and
\begin{align*}
1-s^2-t^2
=& (1-s)\{2-(1-s)\}-t^2 = 2(1-s)-(1-s)^2-t^2 \\
=& 2 \biggl( \frac{1}{2}g\Delta^2 + \frac{1}{2}\gamma\Delta^3 - \frac{1}{24}\psi_{xxxx}^\top\psi\Delta^4 \biggr) - \biggl( \frac{1}{2}g\Delta^2 \biggr)^2 \\
& - \biggl( g\Delta^2 + \gamma\Delta^3 + \frac{1}{4g}\gamma^2\Delta^4 + \frac{1}{3}\psi_{xxx}^\top\psi_x\Delta^4 \biggr) +o(\Delta^4) \\
=& \frac{1}{4}\biggl( \eta-g^2-\frac{\gamma^2}{g} \biggr) \Delta^4 +o(\Delta^4)
= \frac{1}{4}\kappa g^2\Delta^4 +o(\Delta^4),
\end{align*}
where
\[
 \kappa=\kappa(x)=\frac{\eta}{g^2}-\frac{\gamma^2}{g^3}-1.
\]
Note that $\kappa$ is nonnegative because
\[
 0\le \det\begin{pmatrix}\psi^\top \\ \psi_x^\top \\ \psi_{xx}^\top \end{pmatrix}
          \begin{pmatrix}\psi & \psi_x & \psi_{xx} \end{pmatrix}
    = \det\begin{pmatrix} 1 & 0      & -g \\
                          0 & g      & \gamma \\
                         -g & \gamma & \eta \end{pmatrix} = \kappa g^3.
\]

For the order of $\delta$, we consider two cases:
(i) $\delta/\Delta^2 \sim g c$ ($0\le c<\infty$) and (ii) $\Delta^2/\delta \sim 0$.

For case (i), noting that $\alpha=1-\delta$, $1-s\sim g\Delta^2/2$, $t^2\sim g\Delta^2$,
\begin{align*}
& (1-\alpha s)^2 = \{1-s + \delta - \delta (1-s)\}^2 \sim (1-s + \delta)^2
 \sim \biggl( \frac{1}{2}g\Delta^2+g c\Delta^2 \biggr)^2 = \frac{1}{4}g^2(1+2c)^2\Delta^4,
\\
& 1 -s^2-\alpha^2 t^2 = 1 -s^2 -t^2 +2\delta t^2 -\delta^2t^2
\sim \frac{1}{4}g^2(\kappa + 8 c)\Delta^4,
\end{align*}
hence
\begin{align}
 \frac{(1-\alpha s)^2}{1-s^2-\alpha^2 t^2}
\sim \frac{(1+2c)^2}{\kappa+8c}.
\label{c}
\end{align}
We consider the minimum value of (\ref{c}) for $c\ge 0$.
For $c>-\kappa/8$, (\ref{c}) has a unique minimum value $1-\kappa/4$ at $c=(2-\kappa)/4$.
Therefore,
\[
\min_{c\ge 0}\frac{(1+2c)^2}{\kappa+8c} =
 \begin{cases}
 \displaystyle
 1-\frac{\kappa}{4} & (\kappa\le 2), \\
 \displaystyle
 \frac{1}{\kappa}   & (\kappa\ge 2).
 \end{cases}
\]

For case (ii),
\[
 (1 -\alpha s)^2 = \{1-s + \delta - \delta (1-s)\}^2 \sim \delta^2,
\] 
\[
 1 -s^2 -\alpha^2 t^2 = 1 -s^2 -t^2 +2\delta t^2 -\delta^2 t^2
 \sim 2 g\Delta^2\delta,
\]
and
\[
 \frac{(1-\alpha s)^2}{1-s^2-\alpha^2 t^2}
 \sim \frac{\delta}{2g\Delta^2} \to \infty.
\]

In summary, we have
\[
\tan^{2}\theta_{\mathrm{c,loc}} = \min\biggl\{
\inf_{x:\kappa(x)\le 2} \biggl\{1-\frac{\kappa(x)}{4}\biggr\},
\inf_{x:\kappa(x)\ge 2} \frac{1}{\kappa(x)} \biggr\}.
\]

\subsection{Coverage probability under model misspecification}
\label{sec:bound}

First, note that the best parameter under the model $\beta_i^\top f(x)$ is given by
$\beta_i^*= \Sigma X^\top g_i$,
where
\[
 X = \begin{pmatrix} f(x_1)^\top \\ \vdots \\ f(x_n)^\top \end{pmatrix}, \quad \Sigma = (X^\top X)^{-1} = \left( \sum_{i=1}^n f(x_i) f(x_i)^\top \right)^{-1}, \quad g_i = \begin{pmatrix} g_i(x_1) \\ \vdots \\ g_i(x_n) \end{pmatrix}.
\]
Let
\[
 y_i = \begin{pmatrix} y_{i1} \\ \vdots \\ y_{in} \end{pmatrix}, \quad \varepsilon_i = \begin{pmatrix} \varepsilon_{i1} \\ \vdots \\ \varepsilon_{i1} \end{pmatrix}.
\]
The least square estimator for $\beta_i^*$ is $\widehat\beta_i^* = \Sigma X^\top y_i$, which is distributed as $\N_p(\beta_i^*,\Sigma)$.
Let $b_{1-\alpha}$ be the threshold for $1-\alpha$ bands when the assumed model is the true model.
The approximate value of $b_{1-\alpha}$ can be obtained by the tube method.

On the other hand, when the true model is $g_i(x)$, the coverage probability becomes
\begin{align}
& \Pr\Biggl( \biggl|\sum_{i=1}^k c_i (\widehat\beta_i^*)^\top f(x) - \sum_{i=1}^k c_i g_i(x)\biggr| \le b_{1-\alpha} \sqrt{f(x)^\top\Sigma f(x)} \ \ \mbox{for all }x\in\mathcal{X},\ c\in\mathcal{C}\Biggr) \nonumber \\
&= \Pr\Biggl( \max_{x\in\mathcal{X},\,c\in\mathcal{C}}\frac{\sum_{i=1}^k c_i (\widehat\beta_i^*)^\top f(x) - \sum_{i=1}^k c_i g_i(x)}{\sqrt{f(x)^\top\Sigma f(x)}} \le b_{1-\alpha} \Biggr) \nonumber \\
&=
\Pr\Biggl( \max_{x\in\mathcal{X},\,c\in\mathcal{C}} \biggl[
\frac{\sum_{i=1}^k c_i (\widehat\beta_i^*-\beta_i^*)^\top f(x)}{\sqrt{f(x)^\top\Sigma f(x)}}+\frac{\sum_{i=1}^k c_i \{(\beta_i^*)^\top f(x) - g_i(x)\}}{\sqrt{f(x)^\top\Sigma f(x)}}\biggr] \le b_{1-\alpha} \Biggr).
\label{pr}
\end{align}

Noting that, for the two functions $h_1(y)$ and $h_2(y)$ on $\mathcal{Y}$,
if $\max_{y\in\mathcal{Y}} (-h_i(y)) = \max_{y\in\mathcal{Y}} h_i(y)$,
then
\[
 \max_y h_1(y) \le \max_y (h_1(y)+h_2(y)) + \max_y (-h_2(y)) = \max_y (h_1(y)+h_2(y)) + \max_y h_2(y),
\]
hence,
\[
 \max_y h_1(y) - \max_y h_2(y) \le \max_y (h_1(y)+h_2(y)) \le \max_y h_1(y) + \max_y h_2(y).
\]
Therefore, (\ref{pr}) is bounded below and above by
$1-\P(b_{1-\alpha}-\delta)$ and $1-\P(b_{1-\alpha}+\delta)$, respectively, where
\begin{align*}
 \delta
=& \max_{x\in\mathcal{X},\,c\in\mathcal{C}}\frac{\sum_{i=1}^k c_i \{(\beta_i^*)^\top f(x) - g_i(x)\}}{\sqrt{f(x)^\top\Sigma f(x)}} \nonumber \\
&= \max_{x\in\mathcal{X}}\sqrt{\frac{\sum_{i=1}^k [(\beta_i^*)^\top f(x) - g_i(x)
 -\frac{1}{k}\sum_{i=1}^k \{(\beta_i^*)^\top f(x) - g_i(x)\}]^2}{f(x)^\top\Sigma f(x)}}
\end{align*}
as given in (\ref{delta}), and
\[
 \P(b) = \Pr\Biggl( \max_{x\in\mathcal{X},\,c\in\mathcal{C}} \frac{\sum_{i=1}^k c_i (\widehat\beta_i^*-\beta_i^*)^\top f(x)}{\sqrt{f(x)^\top\Sigma f(x)}} \ge b \Biggr).
\]
An upper bound for the bias of coverage probability for a $1-\alpha$ confidence band is
\[
 \max\bigl\{ \alpha-\P(b_{1-\alpha}+\delta), \P(b_{1-\alpha}-\delta)-\alpha \bigr\},
\]
which is approximated by
\begin{align*}
\Delta =
 \max\bigl\{ \alpha-\PP\bigl(b_{\mathrm{tube},1-\alpha}+\delta\bigr), \PP\bigl(b_{\mathrm{tube},1-\alpha}-\delta\bigr)-\alpha \bigr\}
\end{align*}
in (\ref{Delta}), where $\PP(b)$ is the tube approximation formula for $P(b)$ given in (\ref{p-hat2}),
and $b_{\mathrm{tube},1-\alpha}$ is the solution of $\PP(b)=\alpha$.

Note that (\ref{pr}) is
\[
 \Pr\Biggl( \max_{x\in\mathcal{X}} \frac{\sum_{i=1}^k [(\widehat\beta_i^*)^\top f(x) - g_i(x)
 -\frac{1}{k}\sum_{i=1}^k \{(\widehat\beta_i^*)^\top f(x) - g_i(x)\}]^2}{f(x)^\top\Sigma f(x)} \le b_{1-\alpha}^2 \Biggr),
\]
which is used for the simulation study.

\subsubsection*{Acknowledgment}

The authors thank Professor Toshihiko Shiroishi and Dr.\ Toyoyuki Takada of the National Institute of Genetics, Japan, for providing the dataset of Takada et al.\ \cite{Takada-etal08}.

\renewcommand{\refname}{\large\bf References}

\end{document}